\providecommand{\tabularnewline}{\\}
\numberwithin{equation}{section}
\numberwithin{figure}{section}
\theoremstyle{plain}
\newtheorem{thm}{\protect\theoremname}
\theoremstyle{plain}
\newtheorem{lem}[thm]{\protect\lemmaname}
\newtheorem{prop}[thm]{\protect\propositionname}
\newtheorem{cor}[thm]{\protect\corollaryname}
\providecommand{\lemmaname}{Lemma}
\providecommand{\propositionname}{Proposition}
\providecommand{\theoremname}{Theorem}
\providecommand{\corollaryname}{Corollary}
\newcommand{\abs}[1]{\ensuremath{|#1|}}
\newcommand{\Abs}[1]{\ensuremath{\left|#1\right|}}
\newcommand{\norm}[2]{\ensuremath{|\!|#1|\!|_{#2}}}
\newcommand{\Norm}[2]{\ensuremath{\left|\!\left|#1\right|\!\right|_{#2}}}
\renewcommand{\d}[1]{\ensuremath{\textnormal{d}#1}}
\newcommand{\cC}{\mathcal{C}}
\newcommand{\cO}{\mathcal{O}}
  \providecommand{\lemmaname}{Lemma}
\providecommand{\theoremname}{Theorem}
\begin{document}

\title{$l_{p}$-norms of Fourier coefficients of powers of a Blaschke factor}

\author{Oleg Szehr}

\address{Department of Mathematics and Mechanics, Saint Petersburg State University,
28, Universitetski pr., St. Petersburg, 198504, Russia.}

\email{oleg.szehr@posteo.de}

\author{Rachid Zarouf}

\address{Aix-Marseille Université, CNRS, Centrale Marseille, LATP, UMR 7353,
13453 Marseille, France.}

\email{rachid.zarouf@univ-amu.fr}

\address{Department of Mathematics and Mechanics, Saint Petersburg State University,
28, Universitetski pr., St. Petersburg, 198504, Russia.}

\thanks{The work is supported by Russian Science Foundation grant 14-41-00010}

\email{rzarouf@gmail.com}

\keywords{Powers of a Blaschke factor, Fourier coefficients, Taylor coefficients, $l_{p}-$ norms}
\begin{abstract}
We determine the asymptotic behavior of the $l_{p}$-norms of the sequence of Taylor coefficients of $b^{n}$,
where $b=\frac{z-\lambda}{1-\bar{\lambda}z}$ is an automorphism of the unit disk, $p\in[1,\infty]$, and $n$ is large.
It is known that in the parameter range $p\in[1,2]$ a sharp upper bound
\begin{align*}
\Norm{b^{n}}{l_{p}^A}\leq C_{p}n^{\frac{2-p}{2p}}
\end{align*}
holds. In this article we find that this estimate is valid even when $p\in[1,4)$. We prove that 
\begin{align*}
\Norm{b^{n}}{l_{4}^A}\leq C_{4}\left(\frac{\log n}{n}\right)^{\frac{1}{4}}
\end{align*}
and for $p\in(4,\infty]$ that
\begin{align*}
\Norm{b^{n}}{l_{p}^A}\leq C_{p}n^{\frac{1-p}{3p}} & .
\end{align*}
We prove that our upper bounds are sharp as $n$ tends to $\infty$ i.e.~they have the correct asymptotic $n$ dependence.
\end{abstract}
\maketitle

\section{\label{sub:Introduction}Introduction}

We denote by $\mathbb{D}=\{z:\:\abs{z}<1\}$ the open unit disk in
the complex plane and by $\partial\mathbb{D}$ its boundary. For a
given $\lambda\in\mathbb{D}$ we denote by 
\[
b=b_{\lambda}=\frac{z-\lambda}{1-\bar{\lambda}z}
\]
the elementary Blaschke factor corresponding to $\lambda$. Clearly
$\abs{b_{\lambda}(z)}=1$ is equivalent to $z\in\partial\mathbb{D}$.
For any $n$ we have that $B=b^{n}$ is a bounded, holomorphic on
$\mathbb{D}$ and as such posses a natural identification with its
boundary behavior on $\partial\mathbb{D}$~\cite{NN}. It is well
known that the Taylor- and Fourier- coefficients of such functions
can be identified~\cite{NN} and we will use these terms interchangeably
in what follows. Let $B=\sum_{k\geq0}\widehat{B}(k)z^{k}$ denote the
Taylor expansion of $B=b^{n}$. We write 
\[
\Norm{B}{l_{p}^{A}}^{p}:=\norm{\widehat{B}}{l_{p}}:=\sum_{k\geq0}\abs{\hat{B}(k)}^{p}
\]
for the usual $l_{p}$-norm of the sequence of Taylor coefficients
of $B$. In the limit of large $p$ we set $\Norm{B}{l_{\infty}}:=\sup_{k}\abs{\widehat{B}(k)}$.
We observe that our $l_{p}$-norms only depend on the absolute values
$\abs{\hat{B}(k)}$ and we have 
\[
\Norm{\left(\frac{z-\lambda}{1-\bar{\lambda}z}\right)^{n}}{l_{p}^{A}}=\Norm{\left(\frac{z-\abs{\lambda}}{1-\abs{\lambda}z}\right)^{n}}{l_{p}^{A}}.
\]
In what follows we therefore assume $\lambda\in(0,1)$. We use the following notation: for two positive functions $f,g\: :\: \mathbb{C}\rightarrow\mathbb{R}^+$ we say that $f$ is dominated by $g$, denoted by $f\lesssim g$, if there is a constant $c>0$ such that $f\leq cg;$ and we say that $f$ and $g$ are comparable, denoted by $f\asymp g$, if both $f\lesssim g$ and $g\lesssim f$. In this article
we seek to determine the asymptotic behavior of the norm 
\[
\Norm{B}{l_{p}^{A}}^{p}\ \textnormal{for}\ p\in[1,\infty]
\]
in the limit of large $n$. This appears to be a relatively well-studied
topic. 
 J-P.~Kahane~\cite{JK} has shown that $\Norm{B}{l_{1}^{A}}\asymp c_{1}\sqrt{n}$.
The sharp numerical constant $c_{1}$, i.e. such that $\Norm{B}{l_{1}^{A}}/\sqrt{n}\rightarrow c_{1}$ as $n$ tends to $\infty$, was computed in~\cite{DG} but the proof was
carried out more precisely in \cite{BHl}. The discussion of $l_{p}$-norms
for $p\neq1$ occured in~\cite{BS}, where the asymptotic behavior
\[
\Norm{B}{l_{p}^{A}}\asymp c_{p}n^{\frac{2-p}{2p}}\ \textnormal{for}\ p\in[1,2]
\]
is derived. Our aim is to extend this discussion to the whole interval
$p\in[1,\infty]$. For us this is motivated by a line
of research that aims at Schäffer's question~\cite{JG,GMP}. We intend to sharpen the results of~\cite{GMP} using precise estimates for $\Norm{b^{n}}{l_{\infty}^{A}}$. The results appear in a forthcoming article~\cite{SZ2}. In~\cite{BS} the authors study the composition operator defined by
the relation $(comp_{b}(f))(z):=f(b(z))$. To assess if $comp_{b}$
is a bounded linear operator from one Banach space of analytic functions
into another it is often enough to know the asymptotic behavior of
$\Norm{b^n}{}$. For example the closed graph theorem shows that
$comp_{b}$ is bounded from $l_{1}^{A}$ to $l_{p}^{A}$ iff the norms
$\Norm{B}{l_{p}^{A}}$ are uniformly bounded. Our main result is as follows.
\begin{thm}
\label{main}Let $B=b_{\lambda}^{n}$ denote a Blaschke product, let
$\lambda\in(0,1)$ and $n\geq1$. Then for the $l_{p}$-norms of the
Taylor coefficients of $B$ we have the following asymptotic behavior
\[
\Norm{B}{l_{p}^{A}}\asymp\begin{cases}
c_{p}\: n^{\frac{2-p}{2p}} & \mbox{if}\ p\in(1,4)\\
c_{4}\:\left(\frac{\log n}{n}\right)^{\frac{1}{4}} & \mbox{if}\ p=4\\
c_{p}\: n^{\frac{1-p}{3p}} & \mbox{if}\ p\in(4,\infty]
\end{cases}
\]
for some constants $c_{p}$ depending on $p$ and $\lambda$. 
\end{thm}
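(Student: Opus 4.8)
The plan is to write each Taylor coefficient of $B=b_\lambda^{n}$ as an oscillatory integral and run a stationary--phase analysis. Since $\abs{b_\lambda}=1$ on $\partial\mathbb D$ we may set $b_\lambda(e^{i\theta})=e^{i\phi(\theta)}$ with $\phi$ real--analytic, odd and $\phi'(\theta)=\frac{1-\lambda^{2}}{1-2\lambda\cos\theta+\lambda^{2}}=:P_\lambda(\theta)$ the Poisson kernel, so that
\[
\widehat B(k)=\frac1{2\pi}\int_{-\pi}^{\pi}e^{i\psi_k(\theta)}\,d\theta,\qquad
\psi_k(\theta):=n\phi(\theta)-k\theta,\quad \psi_k'(\theta)=nP_\lambda(\theta)-k .
\]
The kernel $P_\lambda$ is even and strictly unimodal on $[0,\pi]$, with maximum $\frac{1+\lambda}{1-\lambda}$ at $\theta=0$ and minimum $\frac{1-\lambda}{1+\lambda}$ at $\theta=\pi$; its only critical points are $0$ and $\pi$, where $P_\lambda'=0$ but $P_\lambda''\neq0$. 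Equivalently: for $k/n$ in the interior of $[\frac{1-\lambda}{1+\lambda},\frac{1+\lambda}{1-\lambda}]$ the phase $\psi_k$ has exactly two nondegenerate stationary points $\pm\theta_0(k)$ with $P_\lambda(\theta_0)=k/n$ (of opposite signs of $\psi_k''$, by oddness of $P_\lambda'$), and as $k/n$ tends to either endpoint these coalesce at $\theta=0$ or $\theta=\pi$ into a single cubic (fold) point. Localising the integral by a partition of unity, we distinguish: (i) the \emph{exterior}, $k/n$ bounded away from $[\frac{1-\lambda}{1+\lambda},\frac{1+\lambda}{1-\lambda}]$, where $\psi_k'$ has no zero and repeated integration by parts (using analyticity of $\phi$) gives $\abs{\widehat B(k)}\lesssim e^{-cn}$; (ii) the \emph{deep bulk}, $k/n$ in a compact subinterval of the open band, where ordinary stationary phase yields $\abs{\widehat B(k)}\lesssim n^{-1/2}$ (and a precise two--saddle form, recorded below); and (iii) the two \emph{edges}, $\abs{k-nP_\lambda(0)}$ or $\abs{k-nP_\lambda(\pi)}$ small, where the uniform Airy estimate near a fold gives, on the side on which the stationary points are real,
\[
\abs{\widehat B(k)}\;\lesssim\; n^{-1/3}\bigl(1+n^{-1/3}\abs{k-nP_\lambda(\theta_*)}\bigr)^{-1/4},\qquad \theta_*\in\{0,\pi\},
\]
which interpolates between the edge scale $n^{-1/3}$ (for $\abs{k-nP_\lambda(\theta_*)}\lesssim n^{1/3}$) and the bulk scale $n^{-1/2}$ (for $\abs{k-nP_\lambda(\theta_*)}\asymp n$), consistently with (ii). These regimes cover all $k\geq0$.

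The upper bounds then follow by summing $p$-th powers. The exterior contributes $\lesssim n e^{-cpn}=o(1)$; the deep bulk contributes $\lesssim n\cdot n^{-p/2}=n^{1-p/2}$, there being $\asymp n$ relevant indices. For each edge, writing $m=\abs{k-nP_\lambda(\theta_*)}$, the block $m\lesssim n^{1/3}$ contributes $\lesssim n^{1/3}\cdot n^{-p/3}=n^{(1-p)/3}$, while the block $n^{1/3}\lesssim m\lesssim n$ contributes $\lesssim n^{-p/4}\!\sum_{n^{1/3}\leq m\leq n}\!m^{-p/4}$, which equals $\asymp n^{1-p/2}$ for $p<4$, $\asymp n^{-1}\log n$ for $p=4$, and $\asymp n^{(1-p)/3}$ for $p>4$. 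Comparing the exponents $1-\frac p2$ and $\frac{1-p}{3}$ (equal exactly at $p=4$) gives $\Norm{B}{l_p^A}\lesssim n^{(2-p)/(2p)}$ for $p\in(1,4)$, $\Norm{B}{l_4^A}\lesssim\bigl(\frac{\log n}{n}\bigr)^{1/4}$, $\Norm{B}{l_p^A}\lesssim n^{(1-p)/(3p)}$ for $p\in(4,\infty)$, and --- taking a supremum instead of a sum --- $\Norm{B}{l_\infty}\lesssim n^{-1/3}$.

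For the matching lower bounds one must show the model sizes are attained for a positive proportion of the relevant indices, i.e.\ that the two saddle contributions do not systematically cancel. In the deep bulk, stationary phase gives $\widehat B(k)=\alpha(k)\bigl(1+e^{i\Delta(k)}\bigr)+O(n^{-3/2})$ with $\abs{\alpha(k)}\asymp n^{-1/2}$ and $\Delta$ a smooth phase with $\Delta'(k)=2\theta_0(k)\asymp1$; an Abel--summation (van der Corput) bound on $\sum_k e^{i\Delta(k)}$ over a window of $\asymp n$ consecutive indices shows $\sum_k\abs{1+e^{i\Delta(k)}}^{2}=2n+O(1)$, hence $\abs{\widehat B(k)}\gtrsim n^{-1/2}$ on a fraction $\geq c>0$ of them and $\sum_k\abs{\widehat B(k)}^{p}\gtrsim n\cdot n^{-p/2}$. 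Near an edge, in the Airy core $m\lesssim n^{1/3}$ the uniform asymptotics identify the leading term as $c\,n^{-1/3}$ times a fixed continuous profile $A$ with $A(0)\neq0$, so $\abs{\widehat B(k)}\gtrsim n^{-1/3}$ on $\asymp n^{1/3}$ indices, giving $\sum_k\abs{\widehat B(k)}^{p}\gtrsim n^{1/3}n^{-p/3}$ and, taking $k$ nearest $nP_\lambda(0)$, $\Norm{B}{l_\infty}\gtrsim n^{-1/3}$. Finally the extra logarithm at $p=4$ comes from the transition region: on a dyadic block $\abs{k-nP_\lambda(\theta_*)}\asymp M$ with $n^{1/3}\leq M\leq n$ one has $\theta_0\asymp\sqrt{M/n}$, so $\Delta'(k)\asymp\sqrt{M/n}$ and $\Delta$ sweeps $\asymp M^{3/2}n^{-1/2}\gtrsim1$ across the block; the same argument gives $\abs{\widehat B(k)}\gtrsim(nM)^{-1/4}$ on a positive fraction of the $\asymp M$ indices, so the block contributes $\gtrsim M\cdot(nM)^{-1}=n^{-1}$ to $\sum_k\abs{\widehat B(k)}^{4}$, and summing over the $\asymp\log n$ dyadic scales in $[n^{1/3},n]$ gives $\gtrsim n^{-1}\log n$. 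Matching upper and lower bounds in each regime proves the theorem.

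The principal difficulty is the lower bound, not the upper bound: one has to exclude destructive interference between the two saddles uniformly in $n$, i.e.\ prove a quantitative equidistribution (Abel/van der Corput) estimate for the phase difference $\Delta(k)$ over blocks of indices. This is sharpest precisely in the transition regime $\abs{k-nP_\lambda(\theta_*)}\asymp n^{1/3}$, where the ``frequency'' $\Delta'(k)=2\theta_0$ tends to $0$ and frequency $\times$ block length is of order $1$; this borderline is what produces the logarithmic factor at $p=4$, and it is also where the two saddles merge, so the ordinary and the uniform (Airy) expansions must be glued together with care. A secondary, more routine, point is establishing the uniform fold estimate in (iii) with the stated dependence on $k$ across the whole spectral band. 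An alternative route, parallel to \cite{BS} for $p\le2$, would start from the explicit representation of $\widehat B(k)$ as a terminating hypergeometric sum (a Jacobi polynomial) and use its Darboux/Mehler--Heine asymptotics; the saddle structure, and hence the final answer, is the same.
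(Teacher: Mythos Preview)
Your proposal is correct and follows essentially the same architecture as the paper: write $\widehat B(k)$ as an oscillatory integral with phase $\psi_k=n\phi-k\theta$, split according to the location of $k/n$ relative to $[\alpha_0,\alpha_0^{-1}]$ with $\alpha_0=\frac{1-\lambda}{1+\lambda}$, bound the bulk by van der Corput and the edges by the uniform Airy (fold) expansion, then sum. The paper's Proposition~\ref{main_Lemma} is exactly your pointwise upper bound broken into the same regions, and Proposition~\ref{thm:main_Th_1} is exactly your uniform Airy expansion near the fold.

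The one genuine methodological difference is the lower bound for $p\in(1,4)$. The paper does \emph{not} run your bulk equidistribution argument there; instead it observes that since $p'<4$ the already-proven upper bound gives $\|B\|_{l_{p'}^A}\lesssim n^{(2-p')/(2p')}$, and then H\"older together with Plancherel ($\|B\|_{l_2^A}=1$) yield $\|B\|_{l_p^A}\geq\|B\|_{l_{p'}^A}^{-1}\gtrsim n^{(2-p)/(2p)}$. This is much shorter and bypasses any discussion of saddle cancellation in the bulk; on the other hand it only works for $p\in(4/3,4)$ (the paper quotes \cite{BS} for the remaining range), whereas your direct approach would cover all of $(1,4)$ uniformly. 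For $p>4$ and for $p=4$ your argument and the paper's coincide in substance (Airy core of width $n^{1/3}$ for $p>4$; equidistribution of the saddle phase difference for $p=4$), though the paper restricts the $p=4$ sum to $\abs{k-\alpha_0^{-1}n}\in[n^{1/2},n^{3/4}]$ rather than dyadically covering $[n^{1/3},n]$, precisely to stay safely away from the borderline scale $M\asymp n^{1/3}$ you flag, where $\Delta'\times(\text{block length})\asymp1$.

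One small correction: repeated integration by parts on the exterior yields $O(n^{-N})$ for every $N$, not $e^{-cn}$; the paper obtains genuine exponential decay by deforming the contour to a circle $\abs z=s\neq1$. Your remark that the exterior is negligible is of course all that is needed for the theorem.
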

Our proof is based on a detailed analysis of the asymptotic growth of the Taylor coefficients $\widehat{B}(k)$ both with respect to $k$ and $n$. As it turns out this analysis is delicate. Holomorphy of $B$ implies that for any \emph{fixed} $n$ the coefficients $\widehat{B}(k)$ decay exponentially when $k$ grows large. Similarly, it is not difficult to see that at any~\emph{fixed} $k$ the coefficient $\widehat{B}(k)$ decays exponentially with $n$, see Proposition~\ref{main_Lemma} below. The interesting behavior, which is relevant for determining the norms $\Norm{B}{l_{p}^{A}}$, therefore occurs when $k=k(n)$ is a sequence. As $n$ grows large the region of values $k$ that provide the dominating contribution to $\Norm{B}{l_{p}^{A}}$ can change. For instance in case of $\Norm{B}{l_{\infty}^{A}}=\sup_{k\geq0}\abs{\widehat{B}(k)}$ we can guess that the supremum will be achieved on a coefficient whose index $k$ depends on $n$. The question is now, what is actually the right sequence $k=k(n)$ such that $\Norm{B}{l_{\infty}^{A}}$ is achieved. More generally, for our exercise it is crucial to identify for each $p$, which values of $k$ provide the dominating contribution to $\Norm{B}{l_{p}^{A}}$. We therefore decompose the set of values for $k$ into $n$-depending \lq\lq{}intervals\rq\rq{} and show that the regions of $k$ that provide the dominating contributions to $\Norm{B}{l_{p}^{A}}$ depend on $p$ and $\lambda\in(0,1)$.
This fact is one of the main findings of the article at hand and was not observed in preceding publications. In this fact lies also the reason for the structure of the asymptotic behavior provided in Theorem~\ref{main}. Depending on whether $p\in(1,4)$ or $p\in(4,\infty)$ the dominating contribution stems from different regions of $k$ resulting in the differing asymptotics. The dependence on $\lambda$ can be described in terms of the \lq\lq{}critical\rq\rq{} values $\alpha_0=\frac{1-\lambda}{1+\lambda}$ and $\alpha_0^{-1}$, which will be stationary points for the expansion of integrals in our asymptotic analysis. For now a simple way of understanding their critical nature is to view them as values that identify the slowest decay for $\widehat{B}(k)$ in the sense that at $k=\lfloor\alpha_{0}n\rfloor$ and $k=\lfloor\alpha_{0}^{-1}n\rfloor$ we observe the slowest decay of $\widehat{B}(k(n))$ when $n$ grows large. A summary of decay rates of $\abs{\widehat{B}(k)}$ is provided in Table~\ref{PussWideOpen}.
\begin{figure}[ht!]\label{PussWideOpen}
\centering

\begin{tabular}{|c|c|c|}
\hline 
Values of $k(n)$ in interval & Decay of $\widehat{B}(k)$ & Region\tabularnewline
\hline 
\hline 
$[0,\,\alpha n]$ & exponential & I\tabularnewline
\hline 
$(\alpha n,\,\alpha_{0}n-n^{1/3}]$ & $\frac{1}{\alpha_{0}n-k}$ & II\tabularnewline
\hline 
$[\alpha_{0}n-n^{1/3},\,\alpha_{0}n+n^{1/3}]$ & $\frac{1}{n^{1/3}}$ & III\tabularnewline
\hline 
$[\alpha_{0}n+n^{1/3},\,\alpha_{0}^{-1}n-n^{1/3}]$ & $\frac{1}{n^{1/2}\left(\frac{k}{n}-\alpha_{0}\right)^{1/4}\left(\alpha_{0}^{-1}-\frac{k}{n}\right)^{1/4}}$ & IV\tabularnewline
\hline 
$[\alpha_{0}^{-1}n-n^{1/3},\,\alpha_{0}^{-1}n+n^{1/3}]$ & $\frac{1}{n^{1/3}}$ & V\tabularnewline
\hline 
$[\alpha_{0}^{-1}n+n^{1/3},\,\alpha^{-1}n)$ & $\frac{1}{k-\alpha_{0}^{-1}n}$ & VI\tabularnewline
\hline 
$[\alpha^{-1}n,\,\infty)$ & exponential & VII\tabularnewline
\hline
\end{tabular}
\caption{Illustration of decay of $\widehat{B}(k)$ as a function of $k=k(n)$. We set $\alpha_0=\frac{1-\lambda}{1+\lambda}$ and fix arbitrary $\alpha\in(0,\alpha_0)$.}
\end{figure}

In this article we split the discussion of $\widehat{B}(k)$ and $\Norm{B}{l_{p}^{A}}$ conceptually in the derivation of upper and lower estimates.
In Section~\ref{sec:Upper-estimates} we derive upper estimates $\abs{\widehat{B}(k)}$ and compute the resulting upper estimates for $\Norm{B}{l_{p}^{A}}$. In Section \ref{sec:Lower-bounds} we prove the asymptotic sharpness of our upper estimates. Our proof of upper bounds on $\widehat{B}(k)$ will be based on a well-known Van der Corput type estimates, Lemma~\ref{VDCP}. It turns out that in the interval $p\in[1,4)$ sharpness follows from a simple application of Hölder's inequality. The proof of sharpness in the range $p\in[4,\infty]$, however, requires new methods. A core step will be the introduction and development of the so-called
\textit{uniform method of stationary phase} \cite{VB,CFU}, which we employ to derive an asymptotic expansion of $\hat{B}(k)$ when $k$ is near to $\alpha_0^{-1}n$. This method will provide the sharpness of our upper bound when $p\in(4,\infty]$.

\section{\label{sec:Upper-estimates}Upper estimates}

To prove the upper bounds in Theorem \ref{main} we estimate the norm of the $k$-th Taylor coefficient of the $n$-th power of $b_{\lambda}$. Summing the individual coefficients will provide the desired bounds for Theorem~\ref{main}. 

\begin{prop} \label{main_Lemma} Let $B=b_{\lambda}^{n}$ with $\lambda\in(0,1)$
and $n\geq1$. Set $\alpha_{0}:=\frac{1-\lambda}{1+\lambda}$
and choose a fixed $\alpha\in(0,\alpha_{0})$. In the following
we consider sequences $k=k(n)$ and all assertions are meant to hold
for large enough $n$.
\begin{enumerate}
\item If $k/n\leq\alpha$ then $\abs{\widehat{B}(k)}$ decays exponentially
as $n$ tends to $\infty$. Similarly if $k/n\geq\alpha^{-1}$ then
$\abs{\widehat{B}(k)}$ decays exponentially as $n$ tends to $\infty$.
\item If $k/n\in(\alpha,\alpha_{0}-n^{-2/3})\cup(\alpha_{0}^{-1}+n^{-2/3},\alpha^{-1})$
then 
\[
\Abs{\widehat{B}(k)}\lesssim4\max\left\{ \Abs{\frac{1}{\alpha_{0}n-k}},\Abs{\frac{1}{\alpha_{0}^{-1}n-k}}\right\} .
\]

\item If $k/n\in[\alpha_{0}-n^{-2/3},\alpha_{0}+n^{-2/3})\cup(\alpha_{0}^{-1}-n^{-2/3},\alpha_{0}^{-1}+n^{-2/3}]$
then 
\[
\abs{\widehat{B}(k)}\lesssim\frac{1}{n^{1/3}}.
\]

\item If $k/n\in(\alpha_{0}+n^{-2/3},\alpha_{0}^{-1}-n^{-2/3})$, then 
\[
\abs{\widehat{B}(k)}\lesssim\max\left\{ \Abs{\frac{1}{n^{1/2}\left(\alpha_{0}-\frac{k}{n}\right)^{1/4}}},\Abs{\frac{1}{n^{1/2}\left(\alpha_{0}^{-1}-\frac{k}{n}\right)^{1/4}}}\right\} .
\]

\end{enumerate}
\end{prop}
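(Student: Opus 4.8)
The plan is to represent $\widehat B(k)$ as an oscillatory integral over $\partial\mathbb{D}$ and to estimate it by the Van der Corput bounds of Lemma~\ref{VDCP}, after a preliminary analysis of the phase and its first three derivatives; the exponentially small regime, item~(1), is instead obtained by shifting the Cauchy contour. Since $\lambda\in(0,1)$ one may write $b(e^{i\theta})=e^{i\psi(\theta)}$ with $\psi$ real and odd, and a direct computation gives $\psi'(\theta)=\frac{1-\lambda^{2}}{1-2\lambda\cos\theta+\lambda^{2}}=:P_{\lambda}(\theta)$, the Poisson kernel. As $B\in H^{\infty}$, its Taylor coefficients agree with the Fourier coefficients of its boundary values, so, putting $\Phi(\theta):=\psi(\theta)-\frac kn\theta$,
\[
\widehat B(k)=\frac1{2\pi}\int_{-\pi}^{\pi}e^{in\Phi(\theta)}\,d\theta=\frac1{\pi}\int_{0}^{\pi}\cos\!\big(n\Phi(\theta)\big)\,d\theta ,
\]
the last step because $\Phi$ is odd. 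I would then record the elementary facts about $\Phi'=P_{\lambda}-\frac kn$: $P_{\lambda}$ decreases strictly on $[0,\pi]$ from $P_{\lambda}(0)=\alpha_{0}^{-1}$ to $P_{\lambda}(\pi)=\alpha_{0}$; $P_{\lambda}'$ vanishes only at $0$ and $\pi$, where $P_{\lambda}''\neq0$; and $|P_{\lambda}'|$ is unimodal on $(0,\pi)$. Hence $\Phi'$ has no zero on $[0,\pi]$ when $\frac kn\notin[\alpha_{0},\alpha_{0}^{-1}]$, and exactly one zero $\theta_{*}=\theta_{*}(k,n)\in(0,\pi)$ when $\frac kn\in(\alpha_{0},\alpha_{0}^{-1})$, for which one has, uniformly in $k,n$, the comparisons $\pi-\theta_{*}\asymp(\frac kn-\alpha_{0})^{1/2}$ and $\theta_{*}\asymp(\alpha_{0}^{-1}-\frac kn)^{1/2}$ near the two endpoints, $|\Phi''(\theta_{*})|=|P_{\lambda}'(\theta_{*})|\asymp\big((\tfrac kn-\alpha_{0})(\alpha_{0}^{-1}-\tfrac kn)\big)^{1/2}$ (which follows by solving $P_{\lambda}(\theta)=\mu$ for $\cos\theta$, a fractional linear function of $\mu$), and $|\Phi'''|=|P_{\lambda}''|$ bounded below on fixed one-sided neighbourhoods of $0$ and of $\pi$.

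For item~(1) I would not use oscillation. When $k\le\alpha n$, $b^{n}$ is holomorphic on $\mathbb{D}$; Cauchy's formula on $|z|=r_{0}$, with $r_{0}\in(0,1)$ the minimiser of $F(r):=\log\frac{r+\lambda}{1+\lambda r}-\alpha\log r$, together with $\max_{|z|=r_{0}}|b(z)|=\frac{r_{0}+\lambda}{1+\lambda r_{0}}$, yields
\[
\abs{\widehat B(k)}\le r_{0}^{-k}\Big(\frac{r_{0}+\lambda}{1+\lambda r_{0}}\Big)^{n}\le r_{0}^{-\alpha n}\Big(\frac{r_{0}+\lambda}{1+\lambda r_{0}}\Big)^{n}=e^{nF(r_{0})},
\]
and $F(r_{0})<F(1)=0$ because $F'(r)=\frac1r\big(\frac{r(1-\lambda^{2})}{(r+\lambda)(1+\lambda r)}-\alpha\big)$ changes sign from $-$ to $+$ at $r_{0}<1$; the rate depends only on $\alpha$ and $\lambda$. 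When $k\ge\alpha^{-1}n$ one uses instead that $b^{n}$ is holomorphic on $\{|z|<1/\lambda\}$ and shifts to $|z|=R_{0}\in(1,1/\lambda)$, where $\max_{|z|=R_{0}}|b|=\frac{R_{0}-\lambda}{1-\lambda R_{0}}$; the analogous computation gives a negative exponent again.

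For items~(2)--(4) one estimates $\int_{0}^{\pi}e^{in\Phi}$ via Lemma~\ref{VDCP}. In case~(2) there is no stationary point, $\Phi'$ is monotone of one sign with $\min_{[0,\pi]}|\Phi'|$ equal to $\alpha_{0}-\frac kn$ (if $\frac kn<\alpha_{0}$, attained at $\pi$) or $\frac kn-\alpha_{0}^{-1}$ (if $\frac kn>\alpha_{0}^{-1}$, attained at $0$), and $\mathrm{Var}(1/\Phi')\le 1/\min_{[0,\pi]}|\Phi'|$; the first-derivative test gives $\abs{\widehat B(k)}\lesssim\frac1{n(\alpha_{0}-k/n)}=\frac1{\alpha_{0}n-k}$, respectively $\frac1{\abs{\alpha_{0}^{-1}n-k}}$, and taking the maximum gives the claim with the stated numerical constant. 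In case~(3) the zero $\theta_{*}$, when present, lies within $O(n^{-1/3})$ of $\pi$ (resp.\ of $0$); on a fixed neighbourhood of that endpoint $|\Phi'''|\gtrsim1$, so the third-derivative test bounds that piece by $\lesssim n^{-1/3}$, while off it $|\Phi'|\gtrsim1$ and the first-derivative test gives $\lesssim n^{-1}$, for a total $\lesssim n^{-1/3}$. In case~(4) there is one nondegenerate stationary point $\theta_{*}$: on $[\theta_{*}-r,\theta_{*}+r]$, where $r$ is a suitable multiple of $|\Phi''(\theta_{*})|$ chosen so that $|\Phi''|\ge\tfrac12|\Phi''(\theta_{*})|$ there, the second-derivative test gives $\lesssim\big(n\abs{\Phi''(\theta_{*})}\big)^{-1/2}\asymp n^{-1/2}\big((\tfrac kn-\alpha_{0})(\alpha_{0}^{-1}-\tfrac kn)\big)^{-1/4}$, while on the two complementary arcs $\Phi'$ is monotone of one sign with $\min|\Phi'|\gtrsim|\Phi''(\theta_{*})|\,r\asymp(\tfrac kn-\alpha_{0})(\alpha_{0}^{-1}-\tfrac kn)$ and $\mathrm{Var}(1/\Phi')$ of the same order, so the first-derivative test gives $\lesssim\frac1{n(\frac kn-\alpha_{0})(\alpha_{0}^{-1}-\frac kn)}$, which is dominated by the previous bound precisely because $(\tfrac kn-\alpha_{0})(\alpha_{0}^{-1}-\tfrac kn)\gtrsim n^{-2/3}$ in this region. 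Since $\frac kn$ cannot be close to both endpoints, $\big((\tfrac kn-\alpha_{0})(\alpha_{0}^{-1}-\tfrac kn)\big)^{1/4}\asymp\min\{\tfrac kn-\alpha_{0},\alpha_{0}^{-1}-\tfrac kn\}^{1/4}$, so the resulting bound is exactly the stated maximum.

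The main obstacle is the uniform phase analysis used throughout: establishing, with constants independent of $k$ and $n$, the comparisons $\pi-\theta_{*}\asymp(\tfrac kn-\alpha_{0})^{1/2}$, $|\Phi''(\theta_{*})|\asymp\big((\tfrac kn-\alpha_{0})(\alpha_{0}^{-1}-\tfrac kn)\big)^{1/2}$ (and their reflections near $0$), and the fact that $|\Phi''|$ stays comparable to $|\Phi''(\theta_{*})|$ on the whole interval $[\theta_{*}-r,\theta_{*}+r]$ right up to the endpoints of $[0,\pi]$. Calibrating the splitting radius $r$ in case~(4) so that the second- and first-derivative contributions are \emph{both} bounded by $n^{-1/2}\big((\tfrac kn-\alpha_{0})(\alpha_{0}^{-1}-\tfrac kn)\big)^{-1/4}$ is what dictates the threshold $n^{-2/3}$ separating regions~(3) and~(4), and it is also what makes the four bounds match continuously across the region boundaries $\frac kn=\alpha_{0}\pm n^{-2/3}$ and $\alpha_{0}^{-1}\pm n^{-2/3}$; carrying out this bookkeeping carefully, symmetrically near both critical values $\alpha_{0}$ and $\alpha_{0}^{-1}$, is the bulk of the work.
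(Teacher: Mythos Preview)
Your approach is correct and closely parallels the paper's: both represent $\widehat B(k)$ as $\frac1\pi\Re\int_0^\pi e^{ig(t)}\,dt$ with $g=n\Phi$, analyze the phase via the Poisson kernel $P_\lambda$, treat item~(1) by shifting the Cauchy contour to a circle of radius $\neq 1$, and handle items~(2)--(4) by Van der Corput estimates. The difference lies in how the oscillatory bounds are executed. You invoke the full hierarchy of Van der Corput lemmas---the second-derivative test for case~(4) near the nondegenerate stationary point, and the third-derivative test for case~(3) where $|\Phi'''|\gtrsim 1$ near the relevant endpoint---which is the standard stationary-phase workflow. The paper, by contrast, uses \emph{only} the first-derivative Lemma~\ref{VDCP} throughout: in case~(3) it splits at $\pi-c_1 n^{-1/3}$, bounds the short piece trivially by its length $\lesssim n^{-1/3}$, and shows via the Taylor expansion of $P_\lambda$ near $\pi$ that $g'\gtrsim n^{1/3}$ on the remaining piece; in case~(4) it splits at $\varphi_+\pm\delta$ with $\delta=\frac12 n^{-1/2}(\tfrac kn-\alpha_0)^{-1/4}$, bounds the middle piece by $2\delta$, and controls $|g'|$ at the inner endpoints by a mean-value comparison of $g''$ to $g''(\varphi_+)$. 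Your route is cleaner and more conceptual; the paper's is more self-contained since it requires nothing beyond the elementary Lemma~\ref{VDCP}. One small caveat: you cite ``the Van der Corput bounds of Lemma~\ref{VDCP}'' but then appeal to second- and third-derivative tests, which are separate (though standard) statements not contained in that lemma; you should state them or give an external reference.
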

We begin with a well-known lemma due to Van Der
Corput. It will be the key ingredient for the upper estimates of Proposition~\ref{main_Lemma}.
\begin{lem}
\label{VDCP} Let $g$ be a continuously differentiable real
function on $[a,b]\subset\mathbb{R}$, such that $g$ and $g'$ are monotone and $g'$ does not vanish on $[a,b].$ Then 
\[
\Abs{\int_{a}^{b}e^{ig(t)}{\rm d}t}\leq\frac{2}{\abs{g'(a)}}+\frac{2}{\abs{g'(b)}}.
\]
\end{lem}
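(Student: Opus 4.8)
The plan is to reduce the oscillatory integral to the form ``monotone amplitude times pure phase'' by a change of variables, and then to integrate by parts a single time, taking care to phrase everything so that only $g\in C^{1}$ (and not $C^{2}$) is needed.

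\textbf{Preliminary observations.} Since $g'$ is continuous and does not vanish on $[a,b]$, it keeps a constant sign there; hence $1/g'$ is well defined, continuous, and — being the reciprocal of a monotone function of constant sign — itself monotone on $[a,b]$. In particular $1/g'$ has bounded variation on $[a,b]$ with $\mathrm{Var}_{[a,b]}(1/g')=|1/g'(b)-1/g'(a)|$, and the numbers $1/g'(a)$ and $1/g'(b)$ share the same sign.

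\textbf{Integration by parts.} On $[a,b]$ one has $\mathrm{d}\!\left(e^{ig(t)}\right)=ig'(t)e^{ig(t)}\,\mathrm{d}t$, so that $e^{ig(t)}\,\mathrm{d}t=\frac{1}{ig'(t)}\,\mathrm{d}\!\left(e^{ig(t)}\right)$ and therefore
\[
\int_{a}^{b}e^{ig(t)}\,\mathrm{d}t=\int_{a}^{b}\frac{1}{ig'(t)}\,\mathrm{d}\!\left(e^{ig(t)}\right).
\]
The integrator $t\mapsto e^{ig(t)}$ is continuous and the integrand $t\mapsto 1/(ig'(t))$ is of bounded variation, so Riemann--Stieltjes integration by parts applies and yields
\[
\int_{a}^{b}\frac{1}{ig'(t)}\,\mathrm{d}\!\left(e^{ig(t)}\right)=\left[\frac{e^{ig(t)}}{ig'(t)}\right]_{a}^{b}-\int_{a}^{b}e^{ig(t)}\,\mathrm{d}\!\left(\frac{1}{ig'(t)}\right).
\]
Since $|e^{ig(t)}|=1$, the boundary term is bounded by $\frac{1}{|g'(b)|}+\frac{1}{|g'(a)|}$, while the remaining Stieltjes integral is bounded by $\mathrm{Var}_{[a,b]}(1/g')=|1/g'(b)-1/g'(a)|$; because $1/g'(a)$ and $1/g'(b)$ have the same sign, this last quantity is at most $\max\{1/|g'(a)|,1/|g'(b)|\}\le\frac{1}{|g'(a)|}+\frac{1}{|g'(b)|}$. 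Adding the two contributions gives $\left|\int_{a}^{b}e^{ig(t)}\,\mathrm{d}t\right|\le\frac{2}{|g'(a)|}+\frac{2}{|g'(b)|}$, which is the claim.

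\textbf{Main obstacle.} The only delicate point is that $g$ is assumed merely continuously differentiable, so $1/g'$ need not be differentiable and a naive integration by parts is not available; the remedy is to perform the step in the Riemann--Stieltjes sense, which requires only that $1/g'$ be of bounded variation — guaranteed by its monotonicity. Equivalently, one may first substitute $u=g(t)$ (legitimate since $g$ is a $C^{1}$ diffeomorphism onto its image) to bring the integral to the form $\int\psi(u)e^{iu}\,\mathrm{d}u$ with $\psi$ monotone, and then invoke the second mean value theorem for integrals; in that route one should integrate by parts rather than split into real and imaginary parts, so as to obtain the numerical constant $2$ rather than $4$.
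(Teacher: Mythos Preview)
Your proof is correct and follows essentially the same route as the paper: write $e^{ig}\,\mathrm{d}t=\frac{1}{ig'}\,\mathrm{d}(e^{ig})$, integrate by parts, bound the boundary term trivially, and bound the remaining integral by the total variation of $1/g'$, which the monotonicity of $g'$ controls. The one genuine refinement is that you carry out the integration by parts in the Riemann--Stieltjes sense, so that only $g\in C^{1}$ and the monotonicity of $g'$ are used; the paper's own write-up computes $(1/g')'$ and $g''$ explicitly and thus tacitly assumes a second derivative, which is slightly more than the lemma's hypothesis provides.
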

\begin{proof}
Integration by parts shows that

\begin{align*}
\int_{a}^{b}e^{ig(t)}{\rm d}t = \int_{a}^{b}e^{ig(t)}\frac{g'(t)}{g'(t)}{\rm d}t
  =  \left[e^{ig(t)}\frac{1}{g'(t)}\right]_{a}^{b}-\int_{a}^{b}e^{ig(t)}\left(\frac{1}{g'(t)}\right)'{\rm d}t.
\end{align*}
This provides the rough upper estimate
\[
\Abs{\int_{a}^{b}e^{ig(t)}{\rm d}t}\leq\frac{1}{\abs{g'(a)}}+\frac{1}{\abs{g'(b)}}+\int_{a}^{b}\frac{\abs{g''(t)}}{(g'(t))^{2}}{\rm d}t.
\]
Since $g'$ is monotone on $[a,b]$ we have either $g''\geq0$ or
$g''\leq0$ on $[a,b]$ and consequently 
\[
\int_{a}^{b}\frac{\abs{g''(t)}}{(g'(t))^{2}}{\rm d}t=\Abs{\int_{a}^{b}\left(\frac{1}{g'(t)}\right)'{\rm d}t}\leq\frac{1}{\abs{g'(a)}}+\frac{1}{\abs{g'(b)}}.
\] 
\end{proof}
To apply the lemma we rewrite $\widehat{b_{\lambda}^{n}}(k)$ in a convenient way. First $b_{\lambda}(e^{it})\in\partial\mathbb{D}$ for any $t\in(-\pi,\pi]$ and there exists a real
valued function $f_{\lambda}$ so that 
\[
b_{\lambda}(e^{it})=e^{if_{\lambda}(t)},\qquad\forall t\in\left(-\pi,\,\pi\right].
\]
Deriving the above equality with respect to $t$ we find 
\[
ie^{it}\frac{\lambda^{2}-1}{(1-\lambda e^{it})^{2}}=if_{\lambda}'(t)b_{\lambda}(e^{it})
\]
which shows that 
\[
f_{\lambda}'(t)=\frac{1-\lambda^{2}}{\abs{1-\lambda e^{it}}^{2}}=\frac{1-\lambda^{2}}{1+\lambda^{2}-2\lambda\cos(t)},\qquad \forall t\in\left(-\pi,\,\pi\right].
\]
For the Taylor coefficient with $n\geq1$ and $k\geq0$
we can write 
\begin{align*}
\widehat{b_{\lambda}^{n}}(k)=\frac{1}{2\pi}\int_{-\pi}^{\pi}b_{\lambda}^{n}(e^{it})e^{-ikt}{\rm d}t
=\frac{1}{2\pi}\int_{-\pi}^{\pi}e^{i(nf_{\lambda}(t)-kt)}{\rm d}t=\frac{1}{2\pi}\int_{-\pi}^{\pi}e^{ig(t)}{\rm d}t=\frac{1}{\pi}\Re\left\{\int_{0}^{\pi}e^{ig(t)}{\rm d}t\right\},
\end{align*}
where 
\[
g(t)=nf_{\lambda}(t)-kt\qquad t\in[0,\pi].
\]
Computing derivatives we find that $g'(0)=n\alpha_{0}^{-1}-k$,
$g'(\pi)=n\alpha_{0}-k$ and
\[
g''(t)=-\frac{2\lambda n(1-\lambda^{2})\sin(t)}{(1+\lambda^{2}-2\lambda\cos(t))^{2}}.
\]
This implies that $g'$ is strictly decreasing on $(0,\pi)$ with
\[
g'(0)=n\alpha_{0}^{-1}-k>g'(t)>n\alpha_{0}-k=g'(\pi).
\]

\begin{proof}[Proof of Proposition~\ref{main_Lemma}]
For simplicity we focus on the case where $k$ is closer to $n\alpha_{0}$ than to $n\alpha_{0}^{-1}$. The discussion in the alternative case is identical.

\begin{enumerate}
\item This is a direct application of \cite[Theorem 2, point (3)]{SZ}. We recapitulate the main steps for completeness. It is well known~\cite{JG} that for $z,w\in\mathbb{D}$ we have
upper and lower bounds on the elementary Blaschke factor as 
\begin{align*}
\frac{\abs{z}-\abs{w}}{1-\abs{z}\abs{w}}\leq\Abs{\frac{z-w}{1-\bar{w}z}}\leq\frac{\abs{z}+\abs{w}}{1+\abs{z}\abs{w}}.
\end{align*}
Fourier coefficients can be expressed using the usual contour integral
\begin{align*}
\abs{\widehat{B}(k)}=\Abs{\frac{1}{2\pi}\oint_{\partial\mathbb{D}}z^{-k-1}\left(\frac{z-\lambda}{1-\lambda z}\right)^{n}\d z}.
\end{align*}
For the magnitude of the integral we find that 
\begin{align}
\abs{\widehat{B}(k)}\leq\max_{\abs{z}=s}{\frac{\abs{b_{\lambda}^{n}(z)}}{{\abs{z}^{k}}}}=\begin{cases}
\frac{b_{\lambda}^{n}(s)}{{s^{k}}},\ s\in(1,1/\lambda)\\
\frac{\left(\frac{s+\lambda}{1+\lambda s}\right)^{n}}{{s^{k}}},\ s\in(0,1)
\end{cases}.\label{eq:}
\end{align}
If $k/n\geq\alpha^{-1}$ then there exists $s^*\in(1,1/\lambda)$ such that $\frac{b_{\lambda}(s^*)}{{{s^*}^{k/n}}}{\leq\frac{b_{\lambda}(s^*)}{{{s^*}^{\alpha^{-1}}}}}<1$~\cite{SZ}.
If $k/n{\leq}\alpha$ then there exists $s^*\in(\lambda,1)$ such that $\frac{\left(\frac{s^*+\lambda}{1+\lambda s^*}\right)}{{{s^*}^{k/n}}}{\leq\frac{\left(\frac{s^*+\lambda}{1+\lambda s^*}\right)}{{{s^*}^{{\alpha}}}}}<1$~\cite{SZ}.

\item If $k/n\in(\alpha,\alpha_{0}-n^{-2/3})$ then $g'(\pi)>0.$ In particular
$g'>0$ on $[0,\pi]$ and $g$ is strictly increasing while $g'$
is decreasing on this interval. Applying Lemma \ref{VDCP} we get
\begin{eqnarray*}
\abs{\widehat{b_{\lambda}^{n}}(k)} & \leq & \frac{2}{n\alpha_{0}-k}+\frac{2}{n\alpha_{0}^{-1}-k}\\
 & \leq & \frac{4}{n\alpha_{0}-k}.
\end{eqnarray*}

\item If $k/n\in[\alpha_{0} -n^{-2/3},\alpha_{0} +n^{-2/3})$ then $g'(\pi)=\alpha_{0}n-k$ might be positive or negative depending on the choice of $k$.
We fix a constant $c_{1}>0$ (independent of $n$) whose exact value is to be specified later. We split the integral
\[
\int_{0}^{\pi}e^{ig(t)}{\rm d}t=\int_{0}^{\pi-c_1n^{-1/3}}e^{ig(t)}{\rm d}t+\int_{\pi-c_1cn^{-1/3}}^{\pi}e^{ig(t)}{\rm d}t
\]
and notice that
\[
\Abs{\int_{\pi-c_1n^{-1/3}}^{\pi}e^{ig(t)}{\rm d}t}\leq c_1n^{-1/3}.
\]
To estimate the second integral we intend to apply the Van der Corput-type Lemma~\ref{VDCP}, which requires a lower estimate on $g'(t)$ for $t\in[0,\pi-c_1n^{-1/3}]$. To achieve this we expand the function $f'_\lambda$ in a neighborhood of $\pi$, which provides
\[
f_{\lambda}'(\pi-u)=\alpha_{0}+\frac{\lambda(1-\lambda)}{(1+\lambda)^{3}}u^{2}+\cO(u^{4})
\]
as $u$ tends to $0$. Hence for the decreasing function $g'$ we find that for large $n$ and $t\in[0,\pi-c_1n^{-1/3}]$ we have
\begin{align*}
g'(t)\geq g'(\pi-c_{1}n^{-1/3})& =nf_\lambda'(\pi-c_{1}n^{-1/3})-k\\
 & \geq\alpha_{0}n+ c_{1}^{2}\frac{\lambda(1-\lambda)}{(1+\lambda)^{3}}n^{1/3}+\cO(n^{-1/3})-k\\
 & \geq c_{1}^{2}\frac{\lambda(1-\lambda)}{(1+\lambda)^{3}}n^{1/3}-n^{1/3}+\cO(n^{-1/3})\\
 &\geq n^{1/3},
\end{align*}
where we made use of the assumption $k\leq\alpha_{0} n+n^{1/3}$ and have chosen appropriate $c_{1}=c_1(\lambda)>0$ for the last inequality. Applying Lemma \ref{VDCP} on $[0,\pi-c_1 n^{-1/3}]$
we obtain
\[
\Abs{\int_{0}^{\pi-c_1n^{-1/3}}e^{ig(t)}{\rm d}t}\leq4n^{-1/3}.
\]

\item If $k/n\in(\alpha_{0} +n^{-2/3},\alpha_{0}^{-1} -n^{-2/3})$ then the equation $g'(t)=0$ has exactly one solution $\varphi_{+}$ on $(0,\pi)$, see~\cite{SZ}. Direct computation shows that
\[
\abs{g''(\varphi_{+})}=k\left(\frac{k}{n}-\alpha_{0}\right)^{1/2}\left(\alpha_{0}^{-1}-\frac{k}{n}\right)^{1/2}.
\]
We choose $\delta=\delta(n)>0$ whose exact value is to be specified later. We split the integral
\[
\int_{0}^{\pi}e^{ig(t)}{\rm d}t=\int_{0}^{\varphi_+-\delta}e^{ig(t)}{\rm d}t+\int_{\varphi_+-\delta}^{\varphi_++\delta}e^{ig(t)}{\rm d}t+
\int_{\varphi_++\delta}^{\pi}e^{ig(t)}{\rm d}t
\]
and notice that
$$\Abs{\int_{\varphi_+-\delta}^{\varphi_++\delta}e^{ig(t)}{\rm d}t}\leq2\delta.$$
The remaining integrals are treated via Lemma~\ref{VDCP}. Since $g'$ is decreasing on $(0,\pi)$ and $g'(0)=\alpha_{0}n-k$ we have%
\begin{align*}
\Abs{\int_{0}^{\varphi_{+}-\delta}e^{ig(t)}{\rm d}t}\lesssim & \frac{1}{\abs{g'(0)}}+\frac{1}{\abs{g'(\varphi_{+}-\delta)}}=\frac{1}{\abs{n\alpha_{0}^{-1}-k}}+\frac{1}{\abs{g'(\varphi_{+}-\delta)}}.
\end{align*}%
As always we assume that $k$ is closer to $\alpha_{0} n$ so that $1/\abs{n\alpha_{0}^{-1}-k}\leq1/\abs{n\alpha_{0}-k}$ and we seek for a suitable lower bound for $\abs{g'(\varphi_{+}-\delta)}$. This is achieved as follows.
First we use the mean-value theorem for integrals to see that there is $s=s(n)$ with
$$\abs{g'(\varphi_{+}-\delta)}=\Abs{\int_{\varphi_{+}-\delta}^{\varphi_{+}}g''(t){\rm d}t}=\abs{g''(s)}\delta$$
for some $s\in(\varphi_{+}-\delta,\varphi_{+})$. By the mean-value theorem for differentiation there exists also $u=u(n)\in(s,\varphi_{+})$
such that
\begin{align*}
\frac{\abs{g''(\varphi_{+})-g''(s)}}{\abs{g''(\varphi_{+})}}=  \frac{(\varphi_{+}-s)\abs{g'''(u)}}{\abs{g''(\varphi_{+})}}&\lesssim \frac{\delta n}{k\left(\frac{k}{n}-\alpha_{0}\right)^{1/2}\left(\alpha_{0}^{-1}-\frac{k}{n}\right)^{1/2}}\\
&\lesssim \frac{\delta}{\left(\frac{k}{n}-\alpha_{0}\right)^{1/2}}.
\end{align*}
For the last inequality we use that $k/n\in(\alpha_{0} +n^{-2/3},\alpha_{0}^{-1}-n^{-2/3})$ is bounded from below. We also made use of the assumption that $k/n$ is closer to $\alpha_{0}$ than $\alpha_{0}^{-1}$, which implies that $\left(\frac{k}{n}-\alpha_{0}\right)^{-1/2}$ is bounded by a constant.
In particular assuming $\delta=\frac{1}{2\left(\frac{k}{n}-\alpha_{0}\right)^{1/4}n^{1/2}}$
we have $\delta\leq\frac{1}{2n^{1/3}}$ and
\begin{align*}
\abs{g''(s)} & \geq\abs{g''(\varphi_{+})}\cdot\Abs{1-\frac{\abs{g''(\varphi_{+})-g''(s)}}{\abs{g''(\varphi_{+})}}}\\
 & \gtrsim\abs{g''(\varphi_{+})}.
\end{align*}
In summary we find
\begin{align*}
\Abs{\int_{0}^{\varphi_{+}-\delta}e^{ig(t)}{\rm d}t}\lesssim & \frac{1}{k-\alpha_{0} n}+\frac{1}{\delta\abs{g''(\varphi_{+})}}\\
\lesssim & \frac{1}{k-\alpha_{0} n}+\frac{1}{\delta n\left(\frac{k}{n}-\alpha_{0}\right)^{1/2}}\\
\lesssim & \frac{1}{k-\alpha_{0} n}+\frac{1}{n^{1/2}\left(\frac{k}{n}-\alpha_{0}\right)^{1/4}}.
\end{align*}
A similar reasoning applies to $\int_{\varphi_{+}+\delta}^{\pi}e^{ig(t)}{\rm d}t$. We obtain in total
\begin{align*}
\Abs{\int_{0}^{\pi}e^{ig(t)}{\rm d}t}\lesssim & \delta+\frac{1}{k-\alpha_{0} n}+\frac{1}{n^{1/2}\left(\frac{k}{n}-\alpha_{0}\right)^{1/4}}\\
\lesssim & \frac{1}{k-\alpha_{0} n}+\frac{1}{n^{1/2}\left(\frac{k}{n}-\alpha_{0}\right)^{1/4}}\\
\lesssim & \frac{1}{n^{1/2}\left(\frac{k}{n}-\alpha_{0}\right)^{1/4}}\left(1+\frac{1}{n^{1/2}\left(\frac{k}{n}-\alpha_{0}\right)^{3/4}}\right)\\
\lesssim & \frac{1}{n^{1/2}\left(\frac{k}{n}-\alpha_{0}\right)^{1/4}}\left(1+\frac{1}{n^{1/2}\left(\frac{k}{n}-\alpha_{0}\right)^{3/4}}\right)\\
\lesssim & \frac{1}{n^{1/2}\left(\frac{k}{n}-\alpha_{0}\right)^{1/4}}
\end{align*}
which completes the proof. 
\end{enumerate}
\end{proof}

We prove the upper bound in Theorem \ref{main}. 
\begin{proof}[Proof of upper bound in Theorem~\ref{main}]
We set $\beta=\frac{\alpha_{0}+\alpha_{0}^{-1}}{2}$ and split the sum 
\[
\Norm{B}{l_{p}^{A}}^{p}=\sum_{k\leq\beta n}\abs{\widehat{B}(k)}^{p}+\sum_{ k>\beta n}\abs{\widehat{B}(k)}^{p}.
\]
For the proof we focus on the second sum, i.e.~we assume that $k$ is closer to $\alpha_0^{-1}n$ than to $\alpha_0n$. (This is for completeness of the exposition and complementary to the proof of Proposition~\ref{main_Lemma}, where we focus on $k$ closer $\alpha_0n$.) The discussion of the first sum is identical. Let $\alpha<\alpha_0$. We split the sum over $k>\beta n$ according to the regions of Proposition~\ref{main_Lemma}
\begin{align*}
&\sum_{k>\beta n}\abs{\widehat{B}(k)}^{p}=\\
&\left(\sum_{\beta n<k\leq \alpha_0^{-1}n-n^{1/3}}+\sum_{\alpha_0^{-1}n-n^{1/3}<k\leq\alpha_0^{-1}n+n^{1/3}}+\sum_{\alpha_0^{-1}n+n^{1/3}<k\leq\alpha^{-1} n}+\sum_{ \alpha^{-1} n < k}\right)\abs{\widehat{B}(k)}^{p}.
\end{align*}
We make use of the respective estimates of Proposition~\ref{main_Lemma} to bound the individual sums.
\begin{itemize}
\item We begin by the \lq\lq large values of $k$\rq\rq, where coefficients decay exponentially. We have
\[
\sum_{\alpha^{-1}n<k}\abs{\widehat{B}(k)}^{p}=\cO\left(n^{1/3}e^{-p(\alpha^{-1}-\alpha_{0}^{-1})n^{2/3}}\right).
\]
Using the first estimate in \eqref{eq:} we find
\[
\sum_{\alpha^{-1}n<k}\abs{\widehat{B}(k)}^{p}\leq\abs{b_{\lambda}(s)}^{pn}\frac{s{}^{-pN_{1}}}{1-s{}^{-p}}.
\]
We choose the radius $s=s_{n}=1+\frac{1}{n^{1/3}}$, which gives 
\[
\frac{b_{\lambda}(s_{n})}{s_{n}^{\alpha_{0}}}=1+\frac{\lambda(1+\lambda)}{3(1-\lambda)^{3}}\frac{1}{n}+\cO\left(\frac{1}{n^{4/3}}\right).
\]
Moreover 
\[
s_{n}{}^{-p}=1-\frac{p}{n^{1/3}}+\cO\left(\frac{1}{n^{2/3}}\right).
\]
In total we get
\begin{align*}
\sum_{\alpha^{-1}n<k}\abs{\widehat{B}(k)}^{p} & \leq\left(\frac{b_{\lambda}(s_{n})}{s_{n}^{\alpha_{0}^{-1}}}\right)^{pn}\frac{s_{n}{}^{-p(\alpha^{-1}-{\alpha_{0}^{-1}})n}}{1-s{}_{n}^{-p}}\\
 & \leq n^{1/3}e^{\frac{\lambda p(1+\lambda)}{3(1-\lambda)^{3}}}\frac{s_{n}{}^{-p(\alpha^{-1}-{\alpha_{0}}^{-1})n}}{p+\cO\left(\frac{1}{n^{1/3}}\right)}\\
 & =\cO\left(n^{1/3}e^{-p(\alpha^{-1}-\alpha_{0}^{-1})n^{2/3}}\right).
\end{align*}

\item We estimate 
\[
\sum_{\alpha_{0}^{-1}n+n^{1/3}<k\leq\alpha^{-1}n}\abs{\widehat{B}(k)}^{p}\lesssim\frac{c^{p}}{n^{p-1}}\frac{1}{n}\sum_{\alpha_{0}^{-1}n+n^{1/3}<k\leq\alpha^{-1}n}\frac{1}{\left({\frac{k}{n}-}\alpha_{0}^{-1}\right)^{p}}.
\]
We set $f(t)=\frac{1}{\left({t-}\alpha_{0}^{-1}\right)^{p}}$
and bound the Riemann sum 
\begin{align*}
\frac{1}{n}\sum_{\alpha_{0}^{-1}n+n^{1/3}<k\leq\alpha^{-1}n}f\left(\frac{k}{n}\right) & {\lesssim\int_{\alpha_{0}^{-1}+n^{-2/3}}^{\alpha^{-1}}f(t){\rm d}t}\\
 & {\lesssim\int_{n^{-2/3}}^{\alpha^{-1}-\alpha_{0}^{-1}}\frac{1}{u^{p}}{\rm d}u}.
\end{align*}
We find for $p=1$ 
\[
\sum_{\alpha_{0}^{-1}n+n^{1/3}<k\leq\alpha^{-1}n}\abs{\widehat{B}(k)}\lesssim\log n,
\]
and for $p\neq1$
\begin{align*}
\sum_{\alpha_{0}^{-1}n+n^{1/3}<k\leq\alpha^{-1}n}\abs{\widehat{B}(k)}^{p}\lesssim\frac{1}{n^{p-1}}\frac{n^{\frac{2p-2}{3}}}{p-1}\lesssim\frac{1}{p-1}\frac{1}{n^{\frac{p-1}{3}}}.
\end{align*}

\item We estimate
\begin{align*}
\sum_{\alpha_0^{-1}n-n^{1/3}<k\leq\alpha_0^{-1}n+n^{1/3}}\abs{\widehat{B}(k)}^{p} & =\sum_{\abs{k/n-\alpha_{0}^{-1}}<\frac{1}{n^{2/3}}}\abs{\widehat{B}(k)}^{p} \leq\frac{c^{p}}{n^{p/3}}\sum_{\abs{k/n-\alpha_{0}^{-1}}<\frac{1}{n^{2/3}}}1\lesssim\frac{1}{n^{\frac{p-1}{3}}}.
\end{align*}
\item
We estimate
\begin{align*}
\sum_{\beta n<k\leq \alpha_0^{-1}n-n^{1/3}}\abs{\widehat{B}(k)}^{p}\leq\frac{c^{p}}{n^{p/2-1}}\frac{1}{n}\sum_{{\beta n<k\leq \alpha_0^{-1}n-n^{1/3}}}\frac{1}{\left(\alpha_{0}^{-1}-\frac{k}{n}\right)^{p/4}}.
\end{align*}
We set $g(t)=\frac{1}{\left(\alpha_{0}^{-1}-t\right)^{p/4}}$
and we bound the Riemann sum
\begin{align*}
\frac{1}{n}\sum_{{\beta<k/n\leq\alpha_{0}^{-1}-n^{-2/3}}}g\left(\frac{k}{n}\right) & \lesssim\int_{\beta}^{\alpha_{0}^{-1}-n^{-2/3}}g(t){\rm d}t\\
 & {\lesssim\int_{n^{-2/3}}^{\alpha_{0}^{-1}-\beta}\frac{1}{u^{p/4}}{\rm d}u},
\end{align*}
{where $\alpha_{0}^{-1}-\beta=\alpha_{0}^{-1}-\frac{\alpha_{0}+\alpha_{0}^{-1}}{2}=\frac{\alpha_{0}^{-1}-\alpha_{0}}{2}>0$.}
Evaluating the integral gives the following upper estimates. If $p=4$
then 
\[
\sum_{\beta n<k\leq\alpha_{0}^{-1}n-n^{1/3}}\abs{\widehat{B}(k)}^{4}\lesssim\frac{\log n}{n},
\]
if $p>4$ then 
\begin{align*}
\sum_{\beta n<k\leq\alpha_{0}^{-1}n-n^{1/3}}\abs{\widehat{B}(k)}^{p} & \lesssim\frac{1}{p-4}\frac{n^{\frac{p-4}{6}}}{n^{p/2-1}}\lesssim\frac{1}{p-4}\frac{1}{n^{\frac{p-1}{3}}}
\end{align*}
and if $p<4$ then 
\[
\sum_{\beta n<k\leq\alpha_{0}^{-1}n-n^{1/3}}\abs{\widehat{B}(k)}^{p}\lesssim\frac{1}{4-p}\frac{1}{n^{p/2-1}}.
\]
\end{itemize}
\end{proof}

\section{\label{sec:Lower-bounds}Lower Estimates}

Before going into the details of a technical discussion of sharpness in Theorem~\ref{main} we summarize some known facts and provide a simple argument for sharpness in the interval $p\in(4/3,4)$. The discussion of sharpness in the interval $p\in(4,\infty)$ is build on an expansion of the Taylor coefficients of $B$ in terms of the Airy function.
The most complicated case turns out the boundary case $p=4$, which is treated separately in the end. We notice that the upper bound in Theorem \ref{main} is sharp for $p=\infty$ as a consequence of \cite[Theorem 2, point (2)]{SZ}. In that reference the behavior of the largest coefficient, $\sup_{k\geq0}\abs{\hat{B}(k)}$ is analyzed and it is shown that to first order we have $\sup_{k\geq0}\Abs{\hat{B}(k)}\sim n^{-1/3}$.
For the case $p=1$ the limit $\lim_{n\rightarrow\infty}\Norm{B}{l_1^A}/\sqrt{n}$ is computed in \cite{JK,DG,BHl}. The case $p=2$ is trivial and a direct consequence of Plancharel's theorem
\begin{align*}
\Norm{B}{l_2^A}=\sum_{k\geq0}\abs{\widehat{B}(k)}^2=<B,B>_{L_2(\partial\mathbb{D})}=1,
\end{align*}
where the last step makes use of the fact that $\bar{B}(z)=1/B(z)$ for $z\in\partial\mathbb{D}$.
Lower estimates are derived in the whole range $p\in[1,2]$ in \cite{BS}, which imply that the theorem is sharp in this region. The asymptotic statement of Theorem~\ref{main} is actually sharp for $p\in[1,\,4)$. This is a consequence of the upper estimate in Theorem~\ref{main}.
\begin{cor}\label{tral}
For $p\in[1,4)$ we have
\[
\Norm{B}{l_{p}^{A}}\asymp c_p n^{\frac{2-p}{2p}}.
\]
\end{cor}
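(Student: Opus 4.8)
\textbf{Proof proposal for Corollary~\ref{tral}.}
The plan is to combine the upper bound already established in Theorem~\ref{main} with a matching lower bound obtained from H\"older's inequality and the trivial identity $\Norm{B}{l_2^A}=1$. For the upper bound there is nothing to do: the proof of the upper bound in Theorem~\ref{main} shows directly that $\Norm{B}{l_p^A}\lesssim n^{\frac{2-p}{2p}}$ for every $p\in[1,4)$, since each of the region-wise sums is $\cO(n^{\frac{2-p}{2p}\cdot p})=\cO(n^{(2-p)/2})$ or smaller when $p<4$ (the dominating contribution being Region~IV, which gives $n^{1-p/2}$). So the content of the corollary is the lower bound $\Norm{B}{l_p^A}\gtrsim n^{\frac{2-p}{2p}}$.

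For the lower bound I would interpolate between $l_p^A$ and the one-sided control we have at the endpoints. Concretely, for $1\le p<2$ write H\"older's inequality on the sequence $(\widehat{B}(k))_k$ with exponents chosen so that $l_2$ sits between $l_p$ and $l_q$ for some $q>2$: one has, for $\theta\in(0,1)$ with $\tfrac12=\tfrac{\theta}{p}+\tfrac{1-\theta}{q}$,
\begin{align*}
1=\Norm{B}{l_2^A}\le\Norm{B}{l_p^A}^{\theta}\,\Norm{B}{l_q^A}^{1-\theta}.
\end{align*}
It then suffices to have an \emph{upper} bound on $\Norm{B}{l_q^A}$ for a single $q>2$ — for instance $q=4$, where Theorem~\ref{main} gives $\Norm{B}{l_4^A}\lesssim(\log n/n)^{1/4}$, or any $q\in(2,4)$ where $\Norm{B}{l_q^A}\lesssim n^{(2-q)/(2q)}$. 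Substituting and solving for $\Norm{B}{l_p^A}$ yields $\Norm{B}{l_p^A}\gtrsim n^{(2-p)/(2p)}$ up to a logarithmic factor if one uses $q=4$; choosing instead $q\in(2,4)$ fixed removes the logarithm and gives exactly the claimed power of $n$. For $p\in(2,4)$ the roles reverse: one uses H\"older in the form $\Norm{B}{l_2^A}\le\Norm{B}{l_p^A}^{\theta}\Norm{B}{l_1^A}^{1-\theta}$ with $\tfrac12=\tfrac{\theta}{p}+(1-\theta)$, together with the known sharp bound $\Norm{B}{l_1^A}\asymp\sqrt n$ from \cite{JK,DG}, and solving again produces the matching lower bound $\Norm{B}{l_p^A}\gtrsim n^{(2-p)/(2p)}$ (note $(2-p)/(2p)<0$ here, consistent with the coefficients getting small). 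Finally, the endpoint $p=2$ is the trivial identity and $p=1$ is covered by \cite{JK}, so the stated range $p\in[1,4)$ is complete.

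The only mild subtlety, and the step I would be most careful about, is the bookkeeping that ensures the auxiliary exponent $q$ can always be chosen strictly inside $(2,4)$ so that the logarithm at $p=4$ never enters, and that $\theta\in(0,1)$ in each regime; this is elementary but worth writing out so the constants $c_p$ are genuinely $n$-independent. One should also remark that this argument \emph{only} gives sharpness below $p=4$: at and above $p=4$ the interpolation lower bound degrades (it would only yield the rate coming from $l_1$ and $l_\infty$, which is not tight), which is exactly why the harder stationary-phase / Airy-function analysis of Section~\ref{sec:Lower-bounds} is needed there.
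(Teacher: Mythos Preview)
Your proof is correct and follows essentially the same route as the paper: both exploit $\Norm{B}{l_2^A}=1$ together with H\"older's inequality and the already-established upper bounds to manufacture the matching lower bound. The paper's version uses the conjugate exponent $p'$ directly, writing $1=\Norm{B}{l_2^A}^2\le\Norm{B}{l_p^A}\Norm{B}{l_{p'}^A}$, which needs $p'<4$, i.e.\ $p>4/3$, and then defers the remaining range $p\in[1,4/3]$ to~\cite{BS}; your three-exponent interpolation with a freely chosen auxiliary $q\in(2,4)$ (for $p<2$) or $q=1$ (for $p>2$) is a mild but genuine improvement, since it handles all of $[1,4)$ in a self-contained way without invoking~\cite{BS}.
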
%
Here is an elementary proof which holds curiously
for $p\in(\frac{4}{3},\,4)$ only. (The rest of the interval is covered already in~\cite{BS} and there is no need to reproduce the discussion.)
\begin{proof}[Proof of corollary~\ref{tral}]
Let $p'$ denote the  Hölder conjugate of $p$ i.e.~$\frac{1}{p}+\frac{1}{p'}=1$. By assumption $\frac{1}{p'}>1-\frac{3}{4}$
and $p'<4$. It follows from the upper estimates in Theorem~\ref{main} (proved in the previous section) that
\begin{align*}
\Norm{b_{\lambda}^{n}}{l_{p'}^A} & \leq C_{p'}n^{\frac{2-p'}{2p'}} =C_{p'}n^{\frac{p-2}{2p}}.
\end{align*}
A straightforward application of Hölder's inequality gives: 
\[
1=\Norm{b_{\lambda}^{n}}{l_{2}^A}^{2}\leq\Norm{b_{\lambda}^{n}}{l_{p}^A}\Norm{b_{\lambda}^{n}}{l_{p'}^A}.
\]
We conclude that 
\begin{align*}
\Norm{b_{\lambda}^{n}}{l_{p}^A} & \geq\frac{1}{\Norm{b_{\lambda}^{n}}{l_{p'}^A}}\geq\tilde{C}_{p}n^{\frac{2-p}{2p}}
\end{align*}
with $\tilde{C}_{p}=\frac{1}{C_{p'}}$. 
\end{proof}%
For our discussion of sharpness we can henceforth assume that $p\in[4,\infty]$. We have already seen in Proposition~\ref{main_Lemma} point (1) that if $k=k(n)$ is a sequence with $k/n\leq\alpha$ of $k/n\geq\alpha$ for some $\alpha\in\left(0,\alpha_0=\frac{1-\lambda}{1+\lambda}\right)$ then $\abs{\hat{B}(k)}$ decay exponentially as $n\rightarrow\infty$. This means that for any $p\in[1,\infty]$ the main contribution in the $l_{p}-$norms of $\widehat{B}$ is due to a critical range of $k$ with $k\in[\alpha_0n,\,\alpha_0^{-1}n]$. In this section we compute an asymptotic expansion of $\hat{B}(k)$ as $k$ and $n$ tend simultaneously to $\infty$ and $k$ approaches the right boundary of $[\alpha_0n,\,\alpha_0^{-1}n]$ from inside: 
\[
\lim_{n\rightarrow\infty}\left(\alpha_0^{-1}-\frac{k}{n}\right)=0^{+}.
\]
In this region the asymptotic behavior of $\hat{B}(k)$ can be written in terms of the Airy function $Ai(x)$. For real arguments the latter can be defined as an improper Riemann integral
$$Ai(x)=\frac{1}{\pi}\int_0^\infty\cos\left(\frac{t^3}{3}+xt\right)\d t.$$
For us the most interesting will be the oscillatory behavior of $Ai$ for large negative arguments. We have the asymptotic approximation
\[
Ai(-x)\sim\frac{1}{x^{1/4}\sqrt{\pi}}\cos\left(\frac{2}{3}x^{3/2}-\frac{\pi}{4}\right),\qquad x\rightarrow+\infty.
\]

\begin{prop}[Asymptotic expansion of $\hat{B}(k)$ for $k$ in a left neighborhood of $\alpha_0^{-1}n$]
\label{thm:main_Th_1}
Let $B=b_{\lambda}^{n}$ with $\lambda\in(0,1)$
and $n\geq1$ and set $\alpha_{0}:=\frac{1-\lambda}{1+\lambda}$. Consider sequences $k=k(n)$ with $k\in[\alpha_0n,\alpha_0^{-1}n]$
such that $\lim_{n\rightarrow\infty}\frac{k}{n}=\alpha_0^{-1}$. 
For the Taylor coefficients of $B$ we have the following asymptotic expansion as $n\rightarrow\infty$
\[
\widehat{B}(k)=\frac{(1-\lambda)^{1/4}}{\left(\lambda(1+\lambda)\right)^{1/12}}\frac{\sqrt{2}}{\sqrt{\frac{k}{n}}\left(\frac{k}{n}-\alpha_0\right)^{1/4}}\frac{Ai(-n^{2/3}\delta^{2})}{n^{1/3}}\left(1+\cO(n^{-1/3})\right),
\]
where $\delta^{2}=\frac{1-\lambda}{\left(\lambda(1+\lambda)\right)^{1/3}}\left(\alpha_0^{-1}-\frac{k}{n}\right).$

\end{prop}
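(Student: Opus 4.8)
The plan is to obtain the asymptotic expansion by applying the uniform method of stationary phase to the oscillatory integral
\[
\widehat{B}(k)=\frac{1}{\pi}\Re\left\{\int_{0}^{\pi}e^{ig(t)}{\rm d}t\right\},\qquad g(t)=nf_\lambda(t)-kt,
\]
in the regime where the stationary point $\varphi_{+}$ of $g$ (solving $g'(\varphi_{+})=0$) approaches the endpoint $t=\pi$ as $k/n\to\alpha_0^{-1}$. Wait --- since $g'(0)=n\alpha_0^{-1}-k$ and $g'(\pi)=n\alpha_0-k$, the stationary point approaches $t=0$ in this regime; so I would first substitute $t\mapsto\pi-t$ or work directly near $t=0$, expanding $f_\lambda$ there. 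Concretely, I would write $\frac{1}{n}g(t)=f_\lambda(t)-\frac{k}{n}t$ and Taylor-expand $f_\lambda$ about the relevant endpoint, so that $\frac{1}{n}g$ has an approximate cubic behavior: a linear term with coefficient $\alpha_0^{-1}-\frac{k}{n}\to 0$ and a cubic term coming from the expansion of $f_\lambda'$ (the quadratic term of $f_\lambda'$ produces the cubic in $g/n$). This cubic-plus-linear structure is exactly what produces the Airy function.

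The key steps, in order, are: (i) rescale $t=n^{-1/3}s$ so that the cubic term is $\mathcal O(1)$ and the linear term becomes $n^{2/3}\bigl(\alpha_0^{-1}-\frac{k}{n}\bigr)s$ up to constants, identifying the natural parameter $\delta^2=\frac{1-\lambda}{(\lambda(1+\lambda))^{1/3}}\bigl(\alpha_0^{-1}-\frac{k}{n}\bigr)$; (ii) apply the uniform stationary phase / Chester--Friedman--Ursell method \cite{VB,CFU}: introduce a change of variable $\phi(t)$ that maps $\frac{1}{n}g$ exactly onto the normal form $\frac{1}{3}\phi^3-\zeta\phi$ (plus a constant), where $\zeta$ is chosen as a smooth function of $k/n$ with $\zeta\asymp\delta^2$; (iii) express the integral via the Airy integral representation, reading off the leading amplitude from the Jacobian $\frac{dt}{d\phi}$ evaluated at $\phi=0$, which by the standard CFU formula equals $\bigl(\frac{2\zeta}{g''(\varphi_+)/n}\bigr)^{1/2}$ up to normalization; (iv) compute the constants: use $|g''(\varphi_+)|=k\bigl(\frac{k}{n}-\alpha_0\bigr)^{1/2}\bigl(\alpha_0^{-1}-\frac{k}{n}\bigr)^{1/2}$ from Proposition~\ref{main_Lemma}'s computation, plug in $k/n\approx\alpha_0^{-1}$ where convenient and keep the factor $\sqrt{k/n}\bigl(\frac{k}{n}-\alpha_0\bigr)^{1/4}$ honest, and match the third-derivative coefficient of $f_\lambda$ at the endpoint to get the $(1-\lambda)^{1/4}/(\lambda(1+\lambda))^{1/12}$ prefactor; (v) control the error: the contribution away from a shrinking neighborhood of $\varphi_+$ is handled by Lemma~\ref{VDCP} exactly as in the proof of Proposition~\ref{main_Lemma} and is exponentially or polynomially smaller, and the CFU expansion gives the relative error $\mathcal O(n^{-1/3})$ from the next term in the asymptotic series (equivalently, from the $\mathcal O(u^4)$ remainder in the expansion of $f_\lambda'$).

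The main obstacle I expect is step (ii)--(iii): making the uniform stationary phase argument rigorous \emph{uniformly} as $\delta^2\to 0$, i.e. tracking that the change of variable $t\mapsto\phi$ is well-defined, smooth, and has a Jacobian bounded away from zero on a full neighborhood of the (possibly complex, possibly coalescing) critical points, and that the resulting error estimate is uniform in the sequence $k=k(n)$ rather than merely pointwise in a fixed $\zeta$. The classical CFU references give the expansion for fixed $\zeta$; here $\zeta=\zeta(n)\to 0$ at rate $n^{2/3}\zeta\to\infty$ or bounded, so one must verify the estimates survive. The secondary nuisance is bookkeeping the constants through the rescaling and the $\Re\{\cdot\}$ to land exactly on the stated prefactor and on $\delta^2$ rather than a constant multiple thereof; this is routine but unforgiving, and is best organized by first deriving the normal form $\frac{1}{n}g(t)=C(k/n)+\frac{1}{3}w^3-\zeta(k/n)w$ with all coefficients computed as convergent power series in $k/n-\alpha_0^{-1}$, then specializing.
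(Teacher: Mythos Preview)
Your proposal is correct and follows essentially the same route as the paper: localize near the coalescing stationary point via a Van~der~Corput estimate (the paper's Lemma~\ref{lem:VDCP_lemma_delta_fixed}), then apply the Chester--Friedman--Ursell cubic change of variables and the uniform stationary phase formula of \cite{VB} to produce the Airy term with the stated $\delta^2$. Two points of detail the paper handles that you should not skip: the leading amplitude $a_0$ is read off from the Jacobian at $\phi=\pm\gamma$ (not at $\phi=0$) via $(z'(\pm\gamma))^2=\pm 2\gamma\big/\tfrac{\partial^2 h}{\partial z^2}(z_\pm)$, and the $Ai'$ coefficient $a_1$ vanishes by the symmetry $z_-=\overline{z_+}$; relatedly, the paper works in the complex coordinate $z=e^{i\varphi}$ and invests real effort in fixing the branch of $\gamma$ (showing $\gamma>0$ and $\rho=0$), which is exactly the ``bookkeeping the constants'' you flag as unforgiving.
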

The slowest decay of $\widehat{B}(k)$ occurs at the boundary $k=\alpha_0^{-1}n$. Here the supremum $\abs{\widehat{B}(k)}$ is attained and corresponds to the $l_\infty^A$-norm. In this situation we can recover some of the findings of \cite[Proposition 4]{SZ}. We find that the boundary behavior as $n$ gets large (at $k=\alpha_0^{-1}n$) is
\begin{align*}
\widehat{B}(k) & \sim\frac{(1-\lambda)^{1/4}}{\left(\lambda(1+\lambda)\right)^{1/12}}\frac{\sqrt{2}}{\sqrt{\frac{1+\lambda}{1-\lambda}}\left(\frac{1+\lambda}{1-\lambda}-\frac{1-\lambda}{1+\lambda}\right)^{1/4}}\frac{Ai(0)}{n^{1/3}}=\frac{1-\lambda}{\left(\lambda(1+\lambda)\right)^{1/3}}\frac{1}{3^{2/3}\Gamma(2/3)}\frac{1}{n^{1/3}},
\end{align*}
which proves (as already shown in~\cite{SZ}) that $\Norm{B}{l_\infty^A}\asymp n^{-1/3}$. This can be extended to the whole interval $p\in(4,\infty]$. We find the corresponding corollary to Proposition~\ref{thm:main_Th_1} and Proposition~\ref{main_Lemma}.
\begin{cor}\label{muda}
For $p\in(4,+\infty]$
\[
\Norm{B}{l_{p}^{A}}\asymp n^{\frac{1-p}{3p}}.
\]
\end{cor}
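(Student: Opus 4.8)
\textbf{Proof proposal for Corollary~\ref{muda}.}

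The plan is to split the sum $\Norm{B}{l_p^A}^p=\sum_{k\geq 0}\abs{\widehat B(k)}^p$ into the seven regions of Table~\ref{PussWideOpen} exactly as in the proof of the upper bound in Theorem~\ref{main}, and then simply re-read that computation in the case $p\in(4,\infty]$. The upper bound has already been established in the previous section: it shows that for $p>4$ the dominating contribution comes from Region~IV (the Van der Corput regime where $\abs{\widehat B(k)}\lesssim n^{-1/2}(\alpha_0^{-1}-k/n)^{-1/4}$, and its mirror), while Regions II, III, V, VI and the exponential tails contribute $\cO(n^{-(p-1)/3})$ as well or better; summing them gives $\Norm{B}{l_p^A}\lesssim n^{(1-p)/(3p)}$. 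So the whole work is the matching \emph{lower} bound, and for that it suffices to produce, for some subsequence of $n$ or for all large $n$, a block of indices $k$ near (say) $\alpha_0^{-1}n$ on which $\abs{\widehat B(k)}$ is genuinely of the predicted size. For $p=\infty$ this is nothing but the $k=\alpha_0^{-1}n$ asymptotic recorded after Proposition~\ref{thm:main_Th_1}, giving $\Norm{B}{l_\infty^A}\asymp n^{-1/3}$, so we may assume $p\in(4,\infty)$.

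Here I would use the Airy expansion of Proposition~\ref{thm:main_Th_1}:
\[
\widehat{B}(k)=\frac{(1-\lambda)^{1/4}}{\left(\lambda(1+\lambda)\right)^{1/12}}\frac{\sqrt{2}}{\sqrt{\tfrac{k}{n}}\left(\tfrac{k}{n}-\alpha_0\right)^{1/4}}\frac{Ai(-n^{2/3}\delta^2)}{n^{1/3}}\left(1+\cO(n^{-1/3})\right),
\]
valid for $k\in[\alpha_0 n,\alpha_0^{-1}n]$ with $k/n\to\alpha_0^{-1}$, where $\delta^2\asymp(\alpha_0^{-1}-k/n)$. The prefactor $(k/n)^{-1/2}(k/n-\alpha_0)^{-1/4}$ is bounded above and below by positive constants on this range, so $\abs{\widehat B(k)}\asymp n^{-1/3}\abs{Ai(-n^{2/3}\delta^2)}$. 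Now restrict to indices $k$ with $x:=n^{2/3}\delta^2\in[1,cn^{2/3}]$ for a small fixed $c>0$, i.e.\ $k/n-\alpha_0^{-1}$ between a constant multiple of $n^{-2/3}$ and a small constant; this is precisely Region~IV. Using $Ai(-x)\sim\pi^{-1/2}x^{-1/4}\cos(\tfrac23 x^{3/2}-\tfrac\pi4)$ we get $\abs{\widehat B(k)}\asymp n^{-1/3}x^{-1/4}\abs{\cos(\tfrac23 x^{3/2}-\tfrac\pi4)}$, and converting back, $\abs{\widehat B(k)}\asymp n^{-1/2}(\alpha_0^{-1}-k/n)^{-1/4}\abs{\cos(\cdots)}$. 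Summing the $p$-th powers over this block and bounding the Riemann sum below by the corresponding integral,
\[
\sum_{k\ \text{in Region IV}}\abs{\widehat B(k)}^p\ \gtrsim\ \frac{1}{n^{p/2-1}}\int_{c_1 n^{-2/3}}^{c_2}\frac{\abs{\cos\!\big(\tfrac23(n^{2/3}u\,c_\lambda)^{3/2}/c_\lambda^{3/2}-\tfrac\pi4\big)}^p}{u^{p/4}}\,{\rm d}u\ \gtrsim\ \frac{n^{(p-4)/6}}{n^{p/2-1}}\ =\ n^{-(p-1)/3},
\]
where the middle step uses that the average of $\abs{\cos}^p$ over a period is a positive constant and that the argument of the cosine sweeps through many periods on $[c_1n^{-2/3},c_2]$, so one can discard the oscillation at the cost of a constant. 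Combining with the upper bound gives $\Norm{B}{l_p^A}\asymp n^{(1-p)/(3p)}$.

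The main obstacle is the last displayed lower bound: one must show that the oscillation of $Ai$ does not produce cancellation that degrades the estimate. Two points need care. First, the error term $(1+\cO(n^{-1/3}))$ in Proposition~\ref{thm:main_Th_1} is multiplicative on $\widehat B(k)$ itself, not on $\abs{\widehat B(k)}^p$ after summation, so one should either keep it inside and note $\abs{1+\cO(n^{-1/3})}^p=1+\cO(pn^{-1/3})$ uniformly on the block (fine since $p$ is fixed), or pass to $\abs{\widehat B(k)}\gtrsim n^{-1/3}x^{-1/4}\abs{\cos(\cdots)}-\cO(n^{-2/3})$ and absorb the correction. Second, and more delicate, the zeros of $\cos(\tfrac23 x^{3/2}-\tfrac\pi4)$ are spaced (in the variable $u=x/n^{2/3}$) by gaps of order $n^{-2/3}u^{-1/2}$, which near the left end $u\asymp n^{-2/3}$ is of order $1$ in the integration variable but corresponds to $\cO(1)$ consecutive integer values of $k$; so one cannot naively replace the sum by an integral there. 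The clean fix is to prove the lower bound by a \emph{dyadic} decomposition of Region~IV into blocks $u\in[2^{-j-1}c_2,2^{-j}c_2]$ with $1\le 2^j\lesssim n^{2/3}$; on each such block $u$ ranges over $\gtrsim 2^{-j}n$ integers, the phase $\tfrac23 x^{3/2}-\tfrac\pi4$ varies by $\gtrsim 2^{-3j/2}n$ (which is $\gg 1$ as long as $2^j\ll n^{2/3}$), so the number of full periods is large, and a standard Riemann-sum-versus-integral comparison for $\abs{\cos}^p$ over many periods yields $\sum_{k\in\text{block }j}\abs{\widehat B(k)}^p\gtrsim 2^{-j}n\cdot n^{-p/3}(2^{j})^{p/4}= n^{1-p/3}2^{j(p/4-1)}$. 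Summing the geometric series in $j$ (dominated by the largest $j\asymp \log_2 n^{2/3}$, since $p/4-1>0$) recovers $n^{1-p/3}\cdot n^{(2/3)(p/4-1)}=n^{-(p-1)/3}$, as needed. The block nearest $\alpha_0^{-1}n$ (Region III, $u\lesssim n^{-2/3}$) one simply drops — it only helps. This completes the plan; everything else is the bookkeeping already carried out for the upper bound, read in reverse.
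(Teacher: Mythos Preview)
Your plan is workable but takes a harder road than the paper, and along the way there are arithmetic slips and a genuine soft spot at exactly the block that carries the weight.

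The paper's lower bound is a one-liner once Proposition~\ref{thm:main_Th_1} is available: take $I_n=\mathbb{Z}\cap[\alpha_0^{-1}n-cn^{1/3},\,\alpha_0^{-1}n]$ with $c$ small enough that $n^{2/3}\delta^2\in[0,2]$ for every $k\in I_n$. On $[-2,0]$ the Airy function is strictly positive (its first negative zero is near $-2.338$), so $|Ai(-n^{2/3}\delta^2)|\geq\min_{[-2,0]}|Ai|>0$, hence $|\widehat B(k)|\gtrsim n^{-1/3}$ for each of the $\asymp n^{1/3}$ indices in $I_n$, and $\Norm{B}{l_p^A}^p\gtrsim n^{1/3}\cdot n^{-p/3}=n^{(1-p)/3}$ with no oscillation to control. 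In other words, the region you explicitly discard (``Region III \dots one simply drops'') is precisely the one the paper uses, and it delivers the lower bound for free.

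Your Region~IV route via the large-$x$ asymptotic of $Ai$ and a dyadic decomposition can be pushed through, but two things need repair. First, the arithmetic: on block $j$ one has $x\asymp n^{2/3}2^{-j}$, so $|\widehat B(k)|\asymp n^{-1/3}x^{-1/4}\asymp n^{-1/2}2^{j/4}$, giving a block contribution $\asymp n^{1-p/2}\,2^{j(p/4-1)}$, not $n^{1-p/3}\,2^{j(p/4-1)}$; and indeed $n^{1-p/3}\cdot n^{(2/3)(p/4-1)}=n^{1/3-p/6}\neq n^{-(p-1)/3}$, whereas the corrected exponent $n^{1-p/2}\cdot n^{(2/3)(p/4-1)}=n^{-(p-1)/3}$ does check out. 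Second, and more seriously, since $p/4-1>0$ your geometric sum is dominated by the \emph{largest} $j$, i.e.\ the block with $u\asymp n^{-2/3}$ and $x\asymp1$. There the asymptotic $Ai(-x)\sim\pi^{-1/2}x^{-1/4}\cos(\tfrac23 x^{3/2}-\tfrac\pi4)$ is not yet accurate, and the phase $\tfrac23 x^{3/2}$ varies by only $O(1)$ across the block, so your ``average of $|\cos|^p$ is a positive constant'' step is unjustified precisely where it matters most. You can salvage this by stopping the dyadic sum at $2^{J'}=n^{2/3}/M$ for a large fixed $M$: you lose only a constant factor $M^{1-p/4}$, while on block $J'$ one has $x\asymp M\gg1$ and phase variation $\asymp M^{3/2}\gg1$, so both the Airy asymptotic and the cosine averaging become legitimate. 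But this is exactly the extra care the paper's choice of $I_n$ sidesteps entirely.
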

\begin{proof}[Proof of Corollary~\ref{muda}]
We consider $p\in(4,\infty)$ as the case $p=\infty$ is clear from above. Since $
\lim_{n\rightarrow\infty}\left(\alpha_0^{-1}-\frac{k}{n}\right)=0$
the prefactor in Proposition~\ref{thm:main_Th_1} is comparable to a
positive constant
\[
\frac{(1-\lambda)^{1/4}}{\left(\lambda(1+\lambda)\right)^{1/12}}\frac{\sqrt{2}}{\sqrt{\frac{k}{n}}\left(\frac{k}{n}-\frac{1-\lambda}{1+\lambda}\right)^{1/4}}\asymp1.
\]
We consider the set $I_{n}$ of integers in $\left[\alpha_0^{-1}n-cn^{1/3},\,\alpha_0^{-1}n\right]$
\[
I_{n}=\mathbb{N}\cap\left[\alpha_0^{-1}n-cn^{1/3},\,\alpha_0^{-1}n\right],
\]
where the constant $c>0$ is chosen such that $n^{2/3}\delta^{2}<2$ for $k\in I_{n}$. Explicitly this condition reads as
\[
n^{2/3}\delta^2=\frac{1-\lambda}{\left(\lambda(1+\lambda)\right)^{1/3}}\left(\alpha_0^{-1}-\frac{k}{n}\right)n^{2/3}\leq c\frac{1-\lambda}{\left(\lambda(1+\lambda)\right)^{1/3}}< 2
\]
i.e.~we can choose $c<2\frac{\left(\lambda(1+\lambda)\right)^{1/3}}{1-\lambda}$. This choice of $c$ ensures that for $k\in I_{n}$ the quantity $-n^{2/3}\delta^{2}$ lies
in the compact interval $\left[-2,0\right]$ on which the Airy function takes values that are separated from $0$,
\[
\abs{Ai(-n^{2/3}\delta^{2})}\geq\min_{\xi\in\left[-2,0\right]}\abs{Ai(\xi)}>0.
\]
(The first negative zero of the Airy function occurs at approximately $-2.33811$).
In other words from Proposition~\ref{thm:main_Th_1} we have for sufficiently large $n$ and all $k\in I_n$ an estimate of the form
\[
\abs{\hat{B}(k)}\geq\frac{K_{1}}{n^{1/3}}\left(1+\cO\left({n^{-1/3}}\right)\right)
\]
with a constant $K_1>0$. Therefore we have
\[
\Norm{B}{l_{p}^{A}}^{p}\gtrsim\frac{\#I_{n}}{n^{p/3}}\gtrsim\frac{n^{1/3}}{n^{p/3}}.
\]
\end{proof}
Notice that for $p<4$ we have that ${\frac{2-p}{2p}}>\frac{1-p}{3p}$ such that the argument given above does not reach the lower bound of Corollary~\ref{tral} for the interval $p\in[1,4)$. Since Proposition~\ref{thm:main_Th_1} provides the exact asymptotic behavior we can conclude that the dominant contribution to the $l_p$ norms of $\widehat{B}$ does not come from the interval $I_n$ when $p\in[1,4)$. Instead for $p>4$ estimates are achieved in the region \emph{IV} of Table~\ref{PussWideOpen}. As we will see in this situation the main contribution to
\begin{align*}
\hat{B}(k)=\overline{\hat{B}(k)} =\frac{1}{2\pi}\int_{-\pi}^{\pi}b_{\lambda}^{-n}(e^{i\varphi})e^{ik\varphi}\d\varphi.
\end{align*}
comes from a small interval around $\varphi=0$, see \cite[Proposition 4, point 2)]{SZ}. For technical convenience we focus our analysis on the integral representation of $\overline{\hat{B}(k)}$, which is the same as $\hat{B}(k)$ as $\lambda$ is real. We fix $\varepsilon\in(0,\pi)$ and split $\hat{B}(k)$ as
\[
\hat{B}(k)=\frac{1}{2\pi}\int_{-\varepsilon}^{\varepsilon}b_{\lambda}^{-n}(z)z^{k}\Bigg|_{z=e^{i\varphi}}\d\varphi+\frac{1}{2i\pi}\int_{\partial\mathbb{D}\setminus\mathcal{C}_{\varepsilon}}b_{\lambda}^{-n}(z)z^{k-1}\d z,
\]
where $\cC_\varepsilon=\{z=e^{i\varphi}\:|\:\varphi\in(-\varepsilon,\varepsilon)\}$. We write the integrals in a way that is convenient for asymptotic analysis.
%
%
%
%
%
%
We introduce a function $h_a$ with $a\in\mathbb{R}^+$ and
\begin{align*}
h_a(z) & =-i\log{\left(\frac{z^{a}(1-\lambda z)}{z-\lambda}\right)}=\textnormal{Arg}\left(\frac{z^{a}(1-\lambda z)}{z-\lambda}\right),
\end{align*}
where $\log$ denotes the principal branch of the complex logarithm. We
have 
\begin{align*}
\frac{1}{2i\pi}\int_{\partial\mathbb{D}\setminus\mathcal{C}_{\varepsilon}}b_{\lambda}^{-n}(z)z^{k-1}\d z & =\frac{1}{\pi}\Re\left\{ \int_{\varepsilon}^{\pi}{z^{k}\bigg(\frac{1-\lambda z}{z-\lambda}\bigg)^{n}}\Bigg|_{z=e^{i\varphi}}\d\varphi\right\} \\
 & =\frac{1}{\pi}\Re\left\{ \int_{\varepsilon}^{\pi}e^{inh_{k/n}(z)}\Bigg|_{z=e^{i\varphi}}\d\varphi\right\}.
\end{align*}
To prove Theorem \ref{thm:main_Th_1} we proceed by the following steps
\begin{enumerate}
\item We prove that $\int_{\partial\mathbb{D}\setminus\mathcal{C}_{\varepsilon}}b_{\lambda}^{n}(z)z^{-k-1}\d z=\cO\left(\frac{1}{n}\right)$, where we make use of a Van der Corput type lemma, see Lemma \ref{lem:VDCP_lemma_delta_fixed} below.

\item We compute an asymptotic expansion for $\frac{1}{2\pi}\int_{-\varepsilon}^{\varepsilon}b_{\lambda}^{-n}(z)z^{k}\Bigg|_{z=e^{i\varphi}}\d\varphi$ relying on the so-called~\textit{uniform method of stationary phase}~\cite[Section 2.3 p. 41]{VB}. The technical core of the latter will be a locally one-to-one cubic transformation of the integrand's argument following the methods of \cite{CFU}. 
\end{enumerate}
\begin{lem}
\label{lem:VDCP_lemma_delta_fixed} Let $k(n)$ be a sequence that approaches $\alpha_0^{-1}n$ from the left, i.e.~$\alpha_0^{-1}-\frac{k}{n}\rightarrow0^+$. Given fixed $\varepsilon\in(0,\pi)$ we have as $n\rightarrow\infty$ that
\[
\hat{B}(k)=\frac{1}{2\pi}\int_{-\varepsilon}^{\varepsilon}e^{inh_{k/n}\left(z\right)}\Bigg|_{z=e^{i\varphi}}\d\varphi+\cO\left(\frac{1}{n}\right).
\]
 \end{lem}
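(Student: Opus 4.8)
The plan is to estimate the "outer" integral $\frac{1}{2i\pi}\int_{\partial\mathbb{D}\setminus\mathcal{C}_{\varepsilon}}b_{\lambda}^{-n}(z)z^{k-1}\d z$ and show it is $\cO(1/n)$, since the decomposition of $\hat{B}(k)$ into the piece over $\mathcal{C}_\varepsilon$ plus this outer piece is already recorded in the excerpt. As computed above, the outer integral equals $\frac{1}{\pi}\Re\big\{\int_{\varepsilon}^{\pi}e^{inh_{k/n}(e^{i\varphi})}\d\varphi\big\}$, so it suffices to bound $\big|\int_{\varepsilon}^{\pi}e^{inh_{k/n}(e^{i\varphi})}\d\varphi\big|$ by $\cO(1/n)$. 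I would like to apply the Van der Corput lemma (Lemma~\ref{VDCP}) with the phase $g(\varphi)=n\,h_{k/n}(e^{i\varphi})$ on $[\varepsilon,\pi]$; the key point to verify is that $g'$ does not vanish and is monotone there, with $|g'|\gtrsim n$ uniformly, which then gives the bound $\frac{2}{|g'(\varepsilon)|}+\frac{2}{|g'(\pi)|}=\cO(1/n)$.

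First I would relate $h_{k/n}(e^{i\varphi})$ to the phase $g(t)=nf_\lambda(t)-kt$ used earlier: on $\partial\mathbb{D}$ one has $b_\lambda^{-n}(e^{i\varphi})(e^{i\varphi})^{k}=e^{-inf_\lambda(\varphi)+ik\varphi}=e^{-ig(\varphi)}$, so up to sign and complex conjugation the phase appearing here is exactly $g$ from the proof of Proposition~\ref{main_Lemma}. Recall that there $g'$ is strictly decreasing on $(0,\pi)$ with $g'(0)=n\alpha_0^{-1}-k$ and $g'(\pi)=n\alpha_0-k$, and $g''(t)=-\frac{2\lambda n(1-\lambda^2)\sin t}{(1+\lambda^2-2\lambda\cos t)^2}$. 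Because $k/n\to\alpha_0^{-1}$, for large $n$ we have $n\alpha_0-k\le n\alpha_0-n\alpha_0^{-1}+o(n)$, which is a negative quantity of order $n$; and $g'(\varepsilon)$ is the value of a strictly decreasing function, so $g'(\varepsilon)<g'(0)\to 0^-$ only in the limit — more carefully, since $g'$ is strictly decreasing and continuous, $g'(\varepsilon)=g'(0)+\int_0^\varepsilon g''\le n\alpha_0^{-1}-k - c(\varepsilon,\lambda)\,n$ for a fixed constant $c(\varepsilon,\lambda)>0$ (as $g''\lesssim -n$ uniformly on $[\varepsilon/2,\varepsilon]$, say, because $\sin t$ is bounded below there). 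Since $n\alpha_0^{-1}-k=o(n)$, this forces $g'(\varepsilon)\le -\tfrac{c}{2}n$ for large $n$. Thus $g'<0$ on all of $[\varepsilon,\pi]$, $|g'|\ge \tfrac{c}{2}n$ there, and $g'$ is monotone; $g$ itself is then monotone on $[\varepsilon,\pi]$ as well.

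The Van der Corput lemma now applies directly on $[\varepsilon,\pi]$ to the integral $\int_\varepsilon^\pi e^{ig(\varphi)}\d\varphi$ (or its complex conjugate, matching the form $e^{inh_{k/n}}$), yielding
\[
\Abs{\int_{\varepsilon}^{\pi}e^{inh_{k/n}(e^{i\varphi})}\d\varphi}\leq\frac{2}{|g'(\varepsilon)|}+\frac{2}{|g'(\pi)|}\leq\frac{8}{c\,n}=\cO\!\left(\frac{1}{n}\right),
\]
and the same bound holds for the integral over $[-\pi,-\varepsilon]$ by the symmetry (the coefficient is real, so it equals $\frac{1}{\pi}\Re\int_\varepsilon^\pi$). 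Combining with the displayed splitting of $\hat{B}(k)$ from the text gives the claim. The main obstacle is the uniformity of the lower bound $|g'|\gtrsim n$ on $[\varepsilon,\pi]$: one must be a little careful that although $g'(0)=n\alpha_0^{-1}-k$ is small, pushing $\varphi$ past the fixed threshold $\varepsilon$ already drives $g'$ down by an amount of order $n$ (with constant depending on the fixed $\varepsilon$ and on $\lambda$ but not on $n$), so the near-stationary behavior near $\varphi=0$ never enters the region of integration. Everything else is a routine application of Lemma~\ref{VDCP}.
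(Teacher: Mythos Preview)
Your proof is correct and follows essentially the same approach as the paper: both reduce to showing the phase derivative on $[\varepsilon,\pi]$ is bounded in absolute value below by a constant times $n$ (you via $g'(\varepsilon)=g'(0)+\int_0^\varepsilon g''$, the paper via the explicit monotone formula $\tilde h'_{k/n}(\varphi)=\frac{k}{n}-f_\lambda'(\varphi)\ge \frac{k}{n}-f_\lambda'(\varepsilon)$), and then invoke Lemma~\ref{VDCP}. The only cosmetic difference is that the paper works directly with $\tilde h_{k/n}$ instead of translating back to the earlier phase $g$.
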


\begin{proof} We define a function $\tilde{h}_a$ on $(-\pi,\pi)$ by setting $\tilde{h}_a(\varphi):=h_a(e^{i\varphi})$. Deriving $\tilde{h}$
with respect to $\varphi$ we find 
\[
\frac{\partial\tilde{h}_{k/n}}{\partial\varphi}(\varphi)=\frac{k}{n}-\frac{1-\lambda^{2}}{1+\lambda^{2}-2\lambda\cos\varphi}.
\]
For $\varphi\in[\varepsilon,\pi)$ we have 
\begin{align*}
n\frac{\partial\tilde{h}_{k/n}}{\partial\varphi}(\varphi) & \geq n\left(\frac{k}{n}-\frac{1-\lambda^{2}}{1+\lambda^{2}-2\lambda(1-\varepsilon)}\right)\\
 & =n\left(\alpha_{0}^{-1}-\frac{1-\lambda^{2}}{1+\lambda^{2}-2\lambda(1-\varepsilon)}-\epsilon_{n}\right)\\
 & =n\left(\frac{2\lambda\varepsilon(1+\lambda)}{\left(1+\lambda^{2}-2\lambda(1-\varepsilon)\right)(1-\lambda)}-\epsilon_{n}\right),
\end{align*}
where 
\[
\epsilon_{n}:=\alpha_{0}^{-1}-\frac{k}{n}\rightarrow0^{+}
\]
as $n$ tends to $\infty$. For $n$ sufficiently large
\[
n\frac{\partial\tilde{h}_{k/n}}{\partial\varphi}(\varphi)\geq constant*n
\]
for any $\varphi\in[\varepsilon,\pi)$. Therefore an application of Lemma~\ref{VDCP} provides
\[
\int_{\varepsilon}^{\pi}e^{inh_{k/n}(z)}\Bigg|_{z=e^{i\varphi}}\d\varphi=\cO\left(\frac{1}{n}\right).
\]

\end{proof}
The next step is to compute an asymptotic expansion of
\[
J_{n}(k)=\frac{1}{2\pi}\int_{-\varepsilon}^{\varepsilon}e^{inh_{k/n}\left(z\right)}\Bigg|_{z=e^{i\varphi}}\d\varphi.
\]
We will see that $J_{n}(k)$ is well suited for
an application of the uniform method of stationary phase \cite[Section 2.3 p. 41]{VB}, which is based in turns on a locally one-to-one cubic transformation of
$h$, which is described in~\cite{CFU} and \cite[p.~366]{RW},~\cite[p.~369]{BH}).
In order to apply the result from \cite{CFU} we perform a locally one-to-one cubit transformation to the function $h$. First we notice that both $\alpha_{0}$ and $\alpha_{0}^{-1}$ are critical values in the sense that for $\alpha\notin\{\alpha_{0},\alpha_{0}^{-1}\}$ the function $h$ has two distinct saddle points $z_{+}$
and $z_{-}$ of rank $1$. However, if $\alpha\in\{\alpha_{0},\alpha_{0}^{-1}\}$ the points $z_+$ and $z_-$ merge to a single saddle point $z_{0}$ (respectively $\tilde{z}_{0}$) of rank $2$. For notational convenience we shall write the function $h_\alpha$ with an additional argument instead of the index, $h_\alpha(z)=h(z,\alpha)$. To be precise the conditions for mentioned saddle points read
\begin{equation}
\frac{\partial h}{\partial z}(z_{+},\alpha)=\frac{\partial h}{\partial z}(z_{-},\alpha)=0,\qquad\frac{\partial^{2}h}{\partial z^{2}}(z_{\pm},\alpha)\neq0\label{eq:2d_assump_wong}
\end{equation}
for $\alpha\notin\{\alpha_{0},\alpha_{0}^{-1}\}$ for saddle-points $z_+$, $z_-$ of rank one and
\[
\frac{\partial h}{\partial z}(z_{0},\alpha_{0})=\frac{\partial^{2}h}{\partial z^{2}}(z_{0},\alpha_{0})=0,\qquad\frac{\partial^{3}h}{\partial z^{3}}(z_{0},\alpha_{0})\neq0,
\]
respectively
\begin{equation}
\frac{\partial h}{\partial z}(\tilde{z}_{0},\alpha_{0}^{-1})=\frac{\partial^{2}h}{\partial z^{2}}(\tilde{z}_{0},\alpha_{0}^{-1})=0,\qquad\frac{\partial^{3}h}{\partial z^{3}}(\tilde{z}_{0},\alpha_{0}^{-1})\neq0\label{eq:1st_assumpt_wong_tilde}
\end{equation}
for saddle points of rank $2$. Computing derivatives we find 
\begin{align*}
i\frac{\partial h}{\partial z} & =-\frac{1}{z-\lambda}+\frac{\alpha}{z}-\frac{\lambda}{1-\lambda z},\\
i\frac{\partial^{2}h}{\partial z^{2}} & =\frac{1}{(z-\lambda)^{2}}-\frac{\alpha}{z^{2}}-\frac{\lambda^{2}}{(1-\lambda z)^{2}},\\
i\frac{\partial^{3}h}{\partial z^{3}} & =-\frac{2}{(z-\lambda)^{3}}+\frac{2\alpha}{z^{3}}-\frac{2\lambda^{3}}{(1-\lambda z)^{3}}.
\end{align*}
The function $h(z,\alpha)$ has a stationary point if and only if
$\partial f/\partial z=0$, i.e.~iff 
\begin{align*}
\alpha=1+\frac{\lambda}{z-\lambda}+\frac{\lambda z}{1-\lambda z}.
\end{align*}
Solving the latter for $z$ gives 
\[
z_{\pm}=\frac{\alpha(1+\lambda^{2})-(1-\lambda^{2})}{2\lambda\alpha}\pm i\sqrt{1-\left(\frac{\alpha(1+\lambda^{2})-(1-\lambda^{2})}{2\lambda\alpha}\right)^{2}}\in\partial\mathbb{D}
\]
and we write $z_{\pm}=e^{i\varphi_{\pm}}$ with $\varphi_{+}\in[0,\pi]$
and $\varphi_{-}\in(-\pi,0]$. Observe that $\varphi_{+}=\varphi_{+}(\alpha)$,
$\varphi_{-}=-\varphi_{+}$and 
\[
\cos^{2}\varphi_{+}=\frac{\alpha(1+\lambda^{2})-(1-\lambda^{2})}{2\lambda\alpha}.
\]

We distinguish the two cases \emph{1)} $\alpha\in(\alpha_{0},\,\alpha_{0}^{-1})$
and \emph{2)} $\alpha\in\left\{ \alpha_{0},\,\alpha_{0}^{-1}\right\} $,
which are characterized by the presence of a stationary point of order
one ($\frac{\partial h}{\partial z}(z_{\pm},\alpha)=0$ but $\frac{\partial^{2}h}{\partial z^{2}}(z_{\pm},\alpha)\neq0$)
in \emph{Case 1)} and of order two ($\frac{\partial h}{\partial z}(z_{\pm},\alpha)=\frac{\partial^{2}h}{\partial z^{2}}(z_{\pm},\alpha)=0$
but $\frac{\partial^{3}h}{\partial z^{3}}(z_{\pm},\alpha)\neq0$)
in \emph{Case 2)}.

\emph{Case~1)} If $\alpha\in(\alpha_{0},\,\alpha_{0}^{-1})$ then
the zeros $z_{+}=e^{i\varphi_{+}}$ and $z_{-}=e^{i\varphi_{-}}$
of $\frac{\partial f}{\partial z}$ are distinct points located on
$\partial\mathbb{D}$ with $\varphi_{+}\in[0,\pi]$ and $\varphi_{-}\in(-\pi,0]$.
Plugging in we see that 
\begin{eqnarray}
i\frac{\partial^{2}h}{\partial z^{2}}\Bigg|_{z=z_{\pm}} & = & \frac{(1-\lambda^{2})(1-z_{\pm}^{2})\lambda}{z_{\pm}(z_{\pm}-\lambda)^{2}(1-\lambda z_{\pm})^{2}}.\label{eq:scd_deriv}
\end{eqnarray}

\emph{Case 2) } If $\alpha\in\left\{ \alpha_{0},\,\alpha_{0}^{-1}\right\} $
then $\frac{\partial h}{\partial z}$ has a unique zero. If $\alpha=\alpha_{0}^{-1}$
then $z_{+}=z_{-}=1=\tilde{z}_{0}$ and 
\[
f(1,\alpha_{0}^{-1})=\frac{\partial f}{\partial z}(1,\alpha_{0}^{-1})=\frac{\partial^{2}f}{\partial z^{2}}(1,\alpha_{0}^{-1})=0,
\]
with 
\[
i\frac{\partial^{3}h}{\partial z^{3}}(1,\alpha_{0}^{-1})=-\frac{2\lambda(1+\lambda)}{(1-\lambda)^{3}}\neq0.
\]
If $\alpha=\alpha_{0}$ then $z_{+}=z_{-}=-1=z_{0}$ and 
\[
i\frac{\partial h}{\partial z}(-1,\alpha_{0})=i\frac{\partial^{2}h}{\partial z^{2}}(-1,\alpha_{0})=0,\qquad i\frac{\partial^{3}h}{\partial z^{3}}(-1,\alpha_{0})=-\frac{2\lambda(1-\lambda)}{(1+\lambda)^{3}}\neq0.
\]
The contour of integration $\mathcal{C}_{\varepsilon}$ in $J_{n}(k)$ is chosen so that it is located in a neighborhood of $\tilde{z}_{0}=1$.
This leads to considering the right boundary $\alpha_{0}^{-1}$ so that $z_{+}$ and $z_{-}$ lie in $\mathcal{C}_{\varepsilon}$ for some $\varepsilon>0$ and $\alpha$ close to $\alpha_{0}^{-1}.$ \eqref{eq:1st_assumpt_wong_tilde}
and \eqref{eq:2d_assump_wong} are clearly satisfied. We are now ready
to perform the one-to-one cubic transformation of~\cite{CFU}. 
\begin{prop} \label{thm:Chester} For $\alpha$ near $\alpha_{0}^{-1}$
the cubic transformation

\[
h(z,\alpha)=\frac{t^{3}}{3}-\frac{(\alpha_{0}^{-1}-\alpha)(1-\lambda)}{\left(\lambda(1+\lambda)\right)^{1/3}}t
\]
has exactly one branch $t=t(z,\alpha)$ which can be expanded into
a power series in $z,$ with coefficients which are continuous in
$\alpha$. On this branch the points $z=z_{\pm}$ correspond, respectively,  to $t=\pm\gamma$. Furthermore the mapping of $t$ to $z$ is one-to-one locally on a neighborhood of $0$ onto $\mathcal{C}_{\varepsilon}$
for some positive $\varepsilon$. \end{prop}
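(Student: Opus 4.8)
The plan is to follow the classical Chester--Friedman--Ursell (CFU) argument for coalescing saddle points, specialised to our concrete function $h(z,\alpha)$. The target normal form is the cubic $\Phi(t)=\tfrac{t^3}{3}-\zeta(\alpha) t$ with $\zeta(\alpha)=\frac{(\alpha_0^{-1}-\alpha)(1-\lambda)}{(\lambda(1+\lambda))^{1/3}}$; its two saddle points are $t=\pm\sqrt{\zeta}$, which must match the two saddle points $z_\pm$ of $h$. First I would set up the two matching conditions that pin down $\zeta$ and the constant term of the transformation: one requires that the two saddle values coincide, i.e.~$h(z_+,\alpha)+h(z_-,\alpha)$ equals $\Phi(\sqrt\zeta)+\Phi(-\sqrt\zeta)=0$ after absorbing a constant (here one checks that the relevant combination of saddle values of $h$ vanishes at $\alpha=\alpha_0^{-1}$ and is analytic, so no additive constant is needed), and the other requires $h(z_+,\alpha)-h(z_-,\alpha)=\Phi(\sqrt\zeta)-\Phi(-\sqrt\zeta)=-\tfrac{4}{3}\zeta^{3/2}$, which determines $\zeta(\alpha)$ as an analytic function of $\alpha$ near $\alpha_0^{-1}$. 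I would then verify, using the third-derivative computation already recorded in the excerpt, namely $i\,\partial_z^3 h(1,\alpha_0^{-1})=-\frac{2\lambda(1+\lambda)}{(1-\lambda)^3}$, that $\zeta'(\alpha_0^{-1})\neq 0$ so that $\zeta(\alpha)=\frac{(1-\lambda)}{(\lambda(1+\lambda))^{1/3}}(\alpha_0^{-1}-\alpha)+O((\alpha_0^{-1}-\alpha)^2)$, matching the stated leading coefficient, and that $z=1$ (the coalesced saddle $\tilde z_0$) corresponds to $t=0$.

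Next I would construct the transformation itself. The defining relation $h(z,\alpha)=\Phi(t)$ implicitly determines $t$ as a function of $z$ (and parameter $\alpha$). The existence of an analytic branch is a consequence of the implicit/inverse function theorem away from the saddles, but the essential point is that at the saddles $dz/dt$ must stay finite and nonzero so that the branch extends analytically \emph{through} the coalescing saddles. Concretely, differentiating $h(z,\alpha)=\tfrac{t^3}{3}-\zeta t$ gives $\partial_z h\, \frac{dz}{dt}=t^2-\zeta=(t-\sqrt\zeta)(t+\sqrt\zeta)$; since $\partial_z h$ has simple zeros at $z_\pm$ (for $\alpha\neq\alpha_0^{-1}$) which correspond to $t=\pm\sqrt\zeta$, the apparent singularities cancel and $\frac{dz}{dt}$ is finite and nonzero there. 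The delicate part is uniformity as $\alpha\to\alpha_0^{-1}$: one must show the cancellation persists in the limit, where both zeros merge. This is handled exactly as in \cite{CFU}: writing everything in terms of the parameter $\zeta$ and appealing to the fact that $h(z,\alpha)$ depends analytically on $(z,\alpha)$ with a rank-$2$ saddle at $(1,\alpha_0^{-1})$, one obtains $\frac{dz}{dt}\big|_{t=0}=\left(\frac{2}{\,i\,\partial_z^3 h(1,\alpha_0^{-1})\,}\right)^{1/3}\neq 0$, which both legitimises the power-series expansion of $t=t(z,\alpha)$ in $z$ with $\alpha$-continuous coefficients and shows it is locally one-to-one.

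I would then invoke the standard Morse/CFU lemma for one parameter: given two analytic functions agreeing in their critical-point structure (here $h(\cdot,\alpha)$ and $\Phi$, both having two simple saddles that coalesce at the critical parameter value), with the matching of saddle values and the non-vanishing of the appropriate derivative, there exists a locally analytic, locally injective change of variables between them, depending continuously (indeed analytically) on the parameter. Finally I would check the contour statement: since the transformation is a local analytic homeomorphism near $t=0$ sending $0\mapsto 1$ and $\pm\sqrt\zeta\mapsto z_\mp$ (with orientation fixed by demanding $t$ real and increasing along the relevant steepest-descent direction), its image of a small real $t$-interval is an arc through $1$ on which, for $\alpha$ sufficiently close to $\alpha_0^{-1}$, both $z_+$ and $z_-$ lie; shrinking, one arranges this arc to be $\mathcal{C}_\varepsilon$ for a suitable $\varepsilon>0$. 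The main obstacle is the uniformity near the coalescence: away from $\alpha=\alpha_0^{-1}$ everything is routine inverse-function-theorem bookkeeping, but showing the branch $t(z,\alpha)$ remains analytic in $z$ and that its coefficients remain continuous as the two saddles collide is the technical heart, and this is precisely where the explicit value of $\partial_z^3 h(1,\alpha_0^{-1})\neq 0$ is used, together with the general machinery of \cite{CFU}.
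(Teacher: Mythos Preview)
Your approach is essentially the same as the paper's: both follow the Chester--Friedman--Ursell scheme, matching saddle values to determine the coefficient $\zeta(\alpha)=\gamma^2$ (and the vanishing additive constant $\rho$), differentiating the defining relation to see that $dz/dt$ stays finite and nonzero at the saddles, and invoking the third derivative $\partial_z^3 h(1,\alpha_0^{-1})\neq 0$ to handle the coalescence and compute $(dz/dt)|_{t=0}$.

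Two small points of comparison. First, the paper is more explicit than you are about selecting the correct branch of $\gamma$: it sets up three equations for $\arg z'(0)$ (from $z'(0)=i\gamma^2/(\alpha_0^{-1}-\alpha)$, from the difference quotient $(z_+-z_-)/(2\gamma)\in i\mathbb{R}_+$, and from $(z'(t_\pm))^2$), solves to get $\gamma>0$ and $\arg z'(0)=\pi/2$, and only then reads off the explicit value of $\gamma^2$. Your one-line ``orientation fixed by demanding $t$ real and increasing along the relevant steepest-descent direction'' is the right idea but would need to be unpacked to actually pin down the sign and recover the stated coefficient. Second, you write that the map sends $\pm\sqrt{\zeta}\mapsto z_\mp$, whereas both the proposition and the paper's proof have $z_\pm\leftrightarrow t=\pm\gamma$; this is just a slip but worth correcting since the sign convention feeds into the later identification of $a_0$ and $a_1$.
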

\begin{proof} Following \cite{CFU} let us define $z(t)$ by the
equation 
\begin{equation}
h(z,\alpha)=\frac{t^{3}}{3}-\gamma^{2}t+\rho\label{eq:change_var}
\end{equation}
where $\gamma=\gamma(\alpha)$ and $\rho=\rho(\alpha)$ are to be
determined. This transformation is shown in \cite{CFU} to be locally
one-to-one and analytic for all $\alpha$ in a neighborhood of $\alpha_{0}^{-1}$.
By differentiating the above equation with respect to $t$ we obtain
\begin{equation}
z'(t)=\frac{t^{2}-\gamma^{2}}{\frac{\partial h}{\partial z}\left(z(t),\alpha\right)}.\label{eq:diff1}
\end{equation}
Since $t\mapsto z(t)$ should yield a conformal map we must require
that $z'$ is finite and nonzero. We see from the above equality that
difficulties can only arise when $z=z_{\pm}$ and when $t=\pm\gamma$.
We change variables such that we have $t=\pm\gamma$ when $z=z_{\pm}$.
More precisely it follows from \cite{CFU} that (see \cite[Theorem 1 p.368]{RW}):

1) the parameters $\gamma=\gamma(\alpha)$ and $\rho=\rho(\alpha)$
can be explicitely determined so that the transformation \eqref{eq:change_var}
has exactly one branch $t=t(z,\alpha)$ which can be expanded into
a power series in $z,$ with coefficients which are continuous in
$\alpha$ for $\alpha$ near $\alpha_{0}^{-1}$,

2) on this branch the points $z=z_{\pm}$ correspond to $t=\pm\gamma$
respectively,

3) for $\alpha$ near $\alpha_{0}^{-1}$ the correspondence $t$ to $z$
is locally one-to-one that is to say from a neighborhood of 0 onto
a neighborhood of $1$ say $\mathcal{C}_{\varepsilon}$ for some positive
$\delta$. Indeed the two simple saddle points $z_{+}=z(t_{+})$ and
$z_{-}=z(t_{-})$ for $z\mapsto f(z,\alpha)$ (resp. $t_{+}=\gamma$
and $t_{-}=-\gamma$ for $t\mapsto-\frac{t^{3}}{3}+\gamma^{2}t+\rho$)
coalesce to a single saddle point of order $2$ when $z_{+}=z_{-}=1=z_{0}$
or equivalentely when $t_{+}=t_{-}=\gamma=0=:t_{0}$.
\textbf{Determination of $\gamma$ and $\rho$}.We show that 
\[
\gamma=\frac{\sqrt{(\alpha_{0}^{-1}-\alpha)(1-\lambda)}}{\left(\lambda(1+\lambda)\right)^{1/6}},\qquad\rho=0.
\]
It follows from \eqref{eq:change_var} that 
\[
h(z(t_{+}),\alpha)=h(z(\gamma),\alpha)=-\frac{2\gamma^{3}}{3}+\rho
\]
and 
\[
h(z(t_{-}),\alpha)=h(z(-\gamma),\alpha)=\frac{2\gamma^{3}}{3}+\rho
\]
so that

\[
\gamma^{3}=\frac{3}{4}\left(f(z_{-},\alpha)-f(z_{+},\alpha)\right)
\]
and 
\[
\rho=\frac{1}{2}\left(f(z_{+},\alpha)+f(z_{-},\alpha)\right).
\]
Since $z_{+}=\overline{z_{-}}$ 
\[
\frac{z_{+}^{\alpha}(1-\lambda z_{+})}{z_{+}-\lambda}=e^{if(z_{+},\alpha)}
\]
implies 
\[
\frac{z_{-}^{\alpha}(1-\lambda z_{-})}{z_{-}-\lambda}=e^{-if(z_{+},\alpha)},
\]
which gives us 
\[
\gamma^{3}=-\frac{3}{2}f(z_{+},\alpha),\qquad\rho=0.
\]
We observe that $\gamma$ is not uniquely determined by the above
equality. Indeed when $z_{+}\neq z_{-}$ it defines three values of
$\gamma$. We discuss below this ambiguity and compute $\arg\gamma$.

{Computation of $\arg\gamma$.}

1) Plugging $t=0$ in \eqref{eq:diff1} with $\alpha$ close to $\alpha_{0}^{-1}$
we get 
\begin{equation}
z'(0)=\frac{-\gamma^{2}}{\frac{\partial f}{\partial z}\left(1,\alpha\right)}=\frac{i\gamma^{2}}{\alpha_{0}^{-1}-\alpha}\label{eq:simple}
\end{equation}
which yields (since $\alpha<\alpha_{0}^{-1}$) 
\begin{equation}
\arg z'(0)=\frac{\pi}{2}+2\arg\gamma\qquad\mod2\pi.\label{eq:1st_equation_zprime0}
\end{equation}

2) We observe that $\gamma=\gamma(\alpha)$ and $\lim_{\alpha\rightarrow\alpha_{0}^{-1}}\gamma=0$
so that 
\[
z'(0)=\lim_{\gamma\rightarrow0}\frac{z(t_{+})-z(0)}{t_{+}}=\lim_{\alpha\rightarrow\alpha_{0}^{-1}}\frac{z_{+}-1}{\gamma}
\]
and 
\[
z'(0)=\lim_{\gamma\rightarrow0}\frac{z(t_{-})-z(0)}{t_{-}}=\lim_{\alpha\rightarrow\alpha_{0}^{-1}}\frac{z_{-}-1}{-\gamma}
\]
which gives 
\[
z'(0)=\lim_{\alpha\rightarrow\alpha_{0}^{-1}}\frac{z_{+}-z_{-}}{2\gamma}.
\]
Since $z_{+}-z_{-}\in i\mathbb{R}_{+}$ (whatever $\alpha$ is close
to $\alpha_{0}^{-1}$ or not) the above equality implies that 
\begin{equation}
\arg z'(0)=\frac{\pi}{2}-\arg\gamma\qquad\mod2\pi.\label{eq:2d_equ_zprime0}
\end{equation}
It follows from \eqref{eq:1st_equation_zprime0} and \eqref{eq:2d_equ_zprime0}
that $\gamma^{3}>0$.

3) We use finally $z'(t_{\pm})$. Differentiating \eqref{eq:diff1}
with respect to $t$ and specifying the corresponding identity at
$t=t_{\pm}=\pm\gamma$ we get 
\[
\left(z'(t_{\pm})\right)^{2}=\frac{2t_{\pm}}{\frac{\partial^{2}h}{\partial z^{2}}\left(z_{\pm},\alpha\right)}
\]
that is to say 
\[
\left(z'(t_{+})\right)^{2}=\frac{2\gamma}{\frac{\partial^{2}h}{\partial z^{2}}\left(z_{+},\alpha\right)},\qquad\left(z'(t_{-})\right)^{2}=-\frac{2\gamma}{\frac{\partial^{2}h}{\partial z^{2}}\left(z_{-},\alpha\right)}.
\]
It follows from \eqref{eq:scd_deriv} that 
\begin{equation}
\frac{\partial^{2}h}{\partial z^{2}}\Bigg|_{z=z_{+}}=-\frac{i(1-\lambda^{2})(z_{-}-z_{+})\lambda}{(z_{+}-\lambda)^{2}(1-\lambda z_{+})^{2}}=-z_{+}^{-2}\frac{2\Im(z_{+})(1-\lambda^{2})\lambda}{\abs{1-\lambda z_{+}}^{4}}\label{eq:scd_deriv_f_z_plus}
\end{equation}
and 
\begin{equation}
\frac{\partial^{2}h}{\partial z^{2}}\Bigg|_{z=z_{-}}=-\frac{i(1-\lambda^{2})(z_{+}-z_{-})\lambda}{(z_{-}-\lambda)^{2}(1-\lambda z_{-})^{2}}=z_{-}^{-2}\frac{2\Im(z_{+})(1-\lambda^{2})\lambda}{\abs{1-\lambda z_{+}}^{4}}.\label{eq:scd_derivat_f_z_minus}
\end{equation}
In particular taking the arguments 
\[
\arg\left(z'(t_{\pm})\right)^{2}=\pi+\arg\gamma+2\varphi_{\pm}\qquad\mod2\pi
\]
where $\varphi_{\pm}=\varphi_{\pm}(\alpha)\rightarrow0$ as $\alpha$
tends to $\alpha_{0}^{-1}.$ Passing after to the limit as $\alpha\rightarrow\alpha_{0}^{-1}$
we obtain the third equation 
\begin{equation}
\arg\left(z'(0)\right)^{2}=\pi\qquad\mod2\pi\label{eq:3_rd_equation_z_prime0}
\end{equation}
because $\gamma=\gamma(\alpha)\rightarrow0$ as $\alpha\rightarrow\alpha_{0}^{-1}.$
Adding \eqref{eq:1st_equation_zprime0} and \eqref{eq:2d_equ_zprime0}
and comparing it with \ref{eq:3_rd_equation_z_prime0} we find 
\[
\pi+\arg\gamma=2\arg\left(z'(0)\right)=\pi\qquad\mod2\pi
\]
and we conclude that $\gamma>0$ and $\arg\, z'(0)=\frac{\pi}{2}$.

{Computation of $\gamma$ and $z'(0)$.} Differentiating \eqref{eq:diff1}
two times with respect to $t$ and specifying the corresponding identity
at $\alpha=\alpha_{0}^{-1}$ and $t=t_{0}$ we get 
\begin{equation}
\left(z'(0)\right)^{3}=\frac{2}{\frac{\partial^{3}h}{\partial z^{3}}(1,\alpha_{0}^{-1})}=\frac{(1-\lambda)^{3}}{i\lambda(1+\lambda)}.\label{eq:z_prime_0_sd}
\end{equation}
Since $\arg z'(0)=\frac{\pi}{2}$ 
\[
z'(0)=i\frac{1-\lambda}{\left(\lambda(1+\lambda)\right)^{1/3}}.
\]
Together with \eqref{eq:simple} this gives 
\[
\gamma^{2}=\frac{(\alpha_{0}^{-1}-\alpha)(1-\lambda)}{\left(\lambda(1+\lambda)\right)^{1/3}},\qquad\gamma=\frac{\sqrt{(\alpha_{0}^{-1}-\alpha)(1-\lambda)}}{\left(\lambda(1+\lambda)\right)^{1/6}}.
\]
\end{proof}
With the developed theory we are ready to conclude the proof of Proposition~\ref{thm:main_Th_1}. We will apply the uniform method of stationary phase~\cite[Section 2.3 pp. 41-44]{VB}
to 
\[
J_{n}(k)=\frac{1}{2\pi}\int_{-\varepsilon}^{\varepsilon}e^{inh\left(z,\frac{k}{n}\right)}\Bigg|_{z=e^{i\varphi}}\d\varphi
\]
making use of the above one-to-one cubic transformation of $h$.
\begin{proof}[Proof of Proposition~\ref{thm:main_Th_1}] We will rely on Lemma~\ref{lem:VDCP_lemma_delta_fixed}. Differentiating $\tilde{h}$ with respect to $\varphi$
\[
\frac{\partial^{2}\tilde{h}}{\partial\varphi^{2}}=\frac{\partial}{\partial\varphi}\left(\frac{\partial h}{\partial z}\frac{\partial z}{\partial\varphi}\right)=\frac{\partial^{2}h}{\partial z^{2}}\left(\frac{\partial z}{\partial\varphi}\right)^{2}+\frac{\partial h}{\partial z}\frac{\partial^{2}z}{(\partial\varphi)^{2}}
\]
it follows from \eqref{eq:scd_deriv_f_z_plus} and \eqref{eq:scd_derivat_f_z_minus}
that 
\[
\frac{\partial^{2}\tilde{h}}{\partial\varphi^{2}}(\varphi_{+},\alpha)=\frac{2\Im(z_{+})(1-\lambda^{2})\lambda}{\abs{1-\lambda z_{+}}^{4}}>0
\]
and 
\[
\frac{\partial^{2}\tilde{h}}{\partial\varphi^{2}}(\varphi_{-},\alpha)=-\frac{2\Im(z_{+})(1-\lambda^{2})\lambda}{\abs{1-\lambda z_{+}}^{4}}<0
\]
which shows that $J_{n}(k)$ is perfectly suited for applying the
approach in \cite[Section 2.3 p. 41]{VB} (with $x_{1}=\varphi_{-}$
and $x_{2}=\varphi_{+}$). Observe that 
\[
\abs{1-\lambda z_{\pm}}=\sqrt{\frac{1-\lambda^{2}}{\alpha}}
\]
and 
\begin{align*}
2\Im(z_{+}) & =2\sqrt{1-\left(\frac{\alpha(1+\lambda^{2})-(1-\lambda^{2})}{2\lambda\alpha}\right)^{2}}\\
 & =\frac{\sqrt{(1-\lambda^{2})(\alpha(1+\lambda)-(1-\lambda))((1+\lambda)-\alpha(1-\lambda))}}{\lambda\alpha}\\
 & =\frac{1-\lambda^{2}}{\lambda\alpha}\sqrt{\left(\alpha-\alpha_{0}\right)\left(\alpha_{0}^{-1}-\alpha\right)}.
\end{align*}
This gives 
\[
\frac{\partial^{2}\tilde{h}}{\partial\varphi^{2}}(\varphi_{\pm},\alpha)=\pm\alpha\sqrt{\left(\alpha-\alpha_{0}\right)\left(\alpha_{0}^{-1}-\alpha\right)}.
\]
A straightforward application of \cite[formula (2.36) p. 43]{VB}
gives 
\[
J_{n}(k)=\left(\frac{Ai(-n^{2/3}\gamma^{2})}{n^{1/3}}a_{0}+\frac{Ai'(-n^{2/3}\gamma^{2})}{n^{2/3}}a_{1}\right)\left(1+\cO(n^{-1/3})\right)
\]
where $a_{0}$ and $a_{1}$ are given (see \cite[formula (2.36c) p. 44]{VB})
by 
\begin{align*}
a_{0} & =\frac{\sqrt{\gamma}}{\sqrt{2}}\left(\frac{1}{\sqrt{\frac{\partial^{2}\tilde{h}}{\partial\varphi^{2}}(\varphi_{+},\alpha)}}+\frac{1}{\sqrt{\abs{\frac{\partial^{2}\tilde{h}}{\partial\varphi^{2}}(\varphi_{-},\alpha)}}}\right)\\
 & =\frac{\sqrt{2\gamma}}{\sqrt{\alpha}\left(\alpha-\frac{1}{\alpha_{0}^{-1}}\right)^{1/4}\left(\alpha_{0}^{-1}-\alpha\right)^{1/4}}\\
 & =\frac{(1-\lambda)^{1/4}}{\left(\lambda(1+\lambda)\right)^{1/12}}\frac{\sqrt{2}}{\sqrt{\alpha}\left(\alpha-\alpha_{0}\right)^{1/4}}
\end{align*}
and 
\[
a_{1}=\frac{\sqrt{\gamma}}{\sqrt{2}}\left(\frac{1}{\sqrt{\abs{\frac{\partial^{2}\tilde{h}}{\partial\varphi^{2}}(\varphi_{-},\alpha)}}}-\frac{1}{\sqrt{\frac{\partial^{2}\tilde{h}}{\partial\varphi^{2}}(\varphi_{+},\alpha)}}\right)=0.
\]
This yields

\[
J_{n}(k)=\frac{(1-\lambda)^{1/4}}{\left(\lambda(1+\lambda)\right)^{1/12}}\frac{\sqrt{2}}{\sqrt{\alpha}\left(\alpha-\alpha_{0}\right)^{1/4}}\frac{Ai(n^{2/3}\gamma^{2})}{n^{1/3}}\left(1+\cO(n^{-1/3})\right)
\]
and the result follows. \end{proof} 
We conclude our analysis with the discussion of the lower bound for $p=4$.
\begin{prop}
For $p=4$ we have 
\[
\Norm{B}{l_{4}^{A}}\asymp\left(\frac{\log n}{n}\right)^{1/4}.
\]
\end{prop}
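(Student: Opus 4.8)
The upper bound $\Norm{B}{l_4^A}\lesssim(\log n/n)^{1/4}$ is already contained in Section~\ref{sec:Upper-estimates}: for $p=4$ the estimates proved there give $\sum_k|\widehat B(k)|^4\lesssim\log n/n$, the dominating term coming from region~IV of Table~\ref{PussWideOpen}, while every other region contributes $\cO(1/n)$ or less. It remains to establish the matching lower bound $\Norm{B}{l_4^A}^4\gtrsim\log n/n$. The plan is to read this off from the Airy expansion of Proposition~\ref{thm:main_Th_1}, which on a window of indices $k$ to the left of $\alpha_0^{-1}n$ gives $|\widehat B(k)|\asymp|Ai(-n^{2/3}\delta^2)|/n^{1/3}$: the slow oscillation of $Ai(-x)$ for large $x$ produces, at every dyadic scale, a fixed proportion of indices on which $|\widehat B(k)|$ attains the largest size permitted in region~IV, and summing the fourth powers of these coefficients over a logarithmic range of scales manufactures the extra $\log n$.

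Fix a function $\omega=\omega(n)\to\infty$ that grows slower than any positive power of $n$, set $k_n=\lfloor\alpha_0^{-1}n\rfloor$, and write $k=k_n-j$ with $j\ge1$. By Proposition~\ref{thm:main_Th_1}, as soon as $j=o(n)$ we have $k/n\to\alpha_0^{-1}$ while $k/n-\alpha_0$ stays bounded away from $0$, so the prefactor there is $\asymp1$ uniformly and
\[
\widehat B(k_n-j)=\big(1+\cO(n^{-1/3})\big)\,\frac{c_\lambda}{n^{1/3}}\,Ai(-x_j),\qquad x_j=\frac{1-\lambda}{(\lambda(1+\lambda))^{1/3}}\,\frac{j}{n^{1/3}},
\]
for a positive constant $c_\lambda$. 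If moreover $n^{1/3}\ll j$ then $x_j\to\infty$, the oscillatory asymptotic $Ai(-x)\sim(\sqrt\pi\,x^{1/4})^{-1}\cos(\tfrac23x^{3/2}-\tfrac\pi4)$ applies with relative error $\cO(\omega^{-3/2})$, and the $\cO(n^{-1/3})$ from Proposition~\ref{thm:main_Th_1} is likewise negligible against $Ai(-x_j)$ away from its zeros. Hence, uniformly on the window $n^{1/3}\omega\le j\le n/\omega$,
\[
\big|\widehat B(k_n-j)\big|\asymp\frac1{n^{1/3}x_j^{1/4}}\,\Big|\cos\Big(\tfrac23 x_j^{3/2}-\tfrac\pi4\Big)\Big|\asymp\frac1{n^{1/2}(j/n)^{1/4}}\,\Big|\cos\Big(\tfrac23 x_j^{3/2}-\tfrac\pi4\Big)\Big|=\frac1{(nj)^{1/4}}\,\Big|\cos\Big(\tfrac23 x_j^{3/2}-\tfrac\pi4\Big)\Big|,
\]
the middle expression being precisely the region~IV rate of Proposition~\ref{main_Lemma} with an explicit oscillating factor.

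Let $-a_1>-a_2>\dots$ be the negative zeros of $Ai$; the asymptotic above gives $a_m\asymp m^{2/3}$ and $a_{m+1}-a_m\asymp a_m^{-1/2}$, and on each gap $(a_m,a_{m+1})$ there is a subinterval $I_m$ of length $\asymp a_m^{-1/2}$ on which $|Ai(-x)|\gtrsim a_m^{-1/4}$. Since $j\mapsto x_j$ is affine with slope $\asymp n^{-1/3}$, the set of integers $j$ with $x_j\in I_m$ has cardinality $\asymp n^{1/3}a_m^{-1/2}$, which stays $\gg1$ throughout the window, and on each such $j$ one gets $|\widehat B(k_n-j)|^4\gtrsim n^{-4/3}a_m^{-1}$. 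Consequently the $m$-th ``bump'' contributes
\[
\sum_{j:\,x_j\in I_m}\big|\widehat B(k_n-j)\big|^4\;\gtrsim\;n^{1/3}a_m^{-1/2}\cdot n^{-4/3}a_m^{-1}\;=\;\frac1{n\,a_m^{3/2}}\;\asymp\;\frac1{n\,m}
\]
to $\Norm{B}{l_4^A}^4$, and these bumps are pairwise disjoint.

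It remains to count the bumps inside the window. A bump $m$ lands there precisely when $m_0\le m\le M$ with $m_0\asymp\omega$ — so that $a_{m_0}\to\infty$, i.e.~the Airy asymptotic applies — and $M\asymp n/\omega^{3/2}$ — so that the associated $j\asymp n^{1/3}a_M\asymp n/\omega$ still obeys $j=o(n)$, i.e.~Proposition~\ref{thm:main_Th_1} applies. Summing,
\[
\Norm{B}{l_4^A}^4\;\gtrsim\;\sum_{m=m_0}^{M}\frac1{n\,m}\;\asymp\;\frac{\log(M/m_0)}{n}\;=\;\frac{\log n+\cO(\log\omega)}{n}\;\asymp\;\frac{\log n}{n},
\]
and combined with the upper bound this yields $\Norm{B}{l_4^A}\asymp(\log n/n)^{1/4}$. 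The main obstacle is to verify that the $\cO(n^{-1/3})$ in Proposition~\ref{thm:main_Th_1} and the error in the oscillatory Airy asymptotic are \emph{uniform} over the entire window $n^{1/3}\omega\le j\le n/\omega$, so that the bumps of $Ai$ really do descend to comparable bumps of $\widehat B$; once this is granted the argument reduces to the geometry of the zeros of $Ai$ and a harmonic sum. Equivalently one can bypass the bump picture and write $\cos^4\theta=\tfrac38+\tfrac12\cos2\theta+\tfrac18\cos4\theta$, bound $\sum_j j^{-1}e^{ic x_j^{3/2}}=o(\log n)$ by a van der Corput second-derivative estimate together with partial summation over dyadic blocks — the cutoff $j\gg n^{1/3}$ being exactly what makes that estimate beat the trivial bound on each block — and keep the main term $\tfrac38\sum_j j^{-1}\asymp\log n$. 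Finally, it is worth noting why a logarithm appears at all: the bump index $m$ runs over roughly $[1,n]$, equivalently $j$ runs over roughly $[n^{1/3},n]$, and it is precisely the two constraints $n^{1/3}\ll j$ (onset of the Airy oscillation) and $j\ll n$ (the requirement $k/n\to\alpha_0^{-1}$) that pin down this range, and hence the $\log n$ in the answer.
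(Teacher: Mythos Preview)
Your argument is correct and reaches the same conclusion as the paper, but the route is genuinely different. The paper restricts to the window $j\in[\sqrt n,\,n^{3/4}]$ (that is, $I_n=\mathbb Z\cap[\alpha_0^{-1}n-n^{3/4},\alpha_0^{-1}n-\sqrt n]$), inserts the Airy asymptotic, and is left with the weighted sum $\sum_{k\in I_n}(\alpha_0^{-1}-k/n)^{-1}\cos^4(\pi s_{nk}-\pi/4)$; it then proves an \emph{equidistribution} statement for the fractional parts $s_{nk}=\langle n\varphi(k/n)\rangle$ via a van~der~Corput second-derivative bound on the Weyl sums followed by Abel summation, concluding that the normalized sum tends to $\int_0^1\cos^4=\tfrac38$. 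Your primary argument bypasses equidistribution altogether, instead exploiting the explicit geometry of the Airy zeros: you locate, between consecutive zeros $a_m$, a sub-interval on which $|Ai(-x)|\gtrsim a_m^{-1/4}$, pull it back to $\asymp n^{1/3}a_m^{-1/2}$ integers $j$, and observe that each such ``bump'' contributes $\asymp(nm)^{-1}$, yielding a harmonic sum over $m$. This is more elementary and makes transparent \emph{why} a logarithm appears: it is the harmonic sum over bump indices. Your alternative route---expand $\cos^4$ and kill the oscillatory pieces by van~der~Corput plus partial summation---is essentially a compressed version of the paper's equidistribution argument.

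Two minor points. First, your bookkeeping $m_0\asymp\omega$ is off: with $j\ge n^{1/3}\omega$ one has $x_j\gtrsim\omega$, hence $a_{m_0}\asymp\omega$ and $m_0\asymp\omega^{3/2}$; this is harmless since $\log(M/m_0)=\log n+\cO(\log\omega)$ either way. Second, the uniformity of the $\cO(n^{-1/3})$ in Proposition~\ref{thm:main_Th_1} that you flag as the ``main obstacle'' is used equally implicitly by the paper; the paper mitigates this by choosing the narrower window $j\in[\sqrt n,n^{3/4}]$, where $\alpha_0^{-1}-k/n\in[n^{-1/2},n^{-1/4}]$ stays safely inside the regime of the uniform stationary-phase expansion, whereas your window $j\in[n^{1/3}\omega,n/\omega]$ pushes closer to both edges. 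Since either window captures $\asymp\log n$ dyadic scales, you could simply adopt the paper's narrower window and the uniformity concern disappears without loss.
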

\begin{proof}
The upper bound $\Norm{B}{l_{4}^{A}}\lesssim\left(\frac{\log n}{n}\right)^{1/4}$
is shown in Section 2. The proof of the lower bound will be concluded in
four steps. 

$\,$

\textbf{Step 1.} \textit{Application of }Theorem \ref{thm:main_Th_1}
\textit{and sommation over a suitable range of $k$. }

$\,$

Given $\lambda\in(0,1)$ we recall the notation we finally chose:
$\alpha_{0}=\frac{1-\lambda}{1+\lambda}$. We define $I_{n}$ to be
the following set of integers
\[
I_{n}=\mathbb{Z}\cap[\alpha_{0}^{-1}n-n^{3/4},\alpha_{0}^{-1}n-\sqrt{n}].
\]
A direct application of Theorem \ref{thm:main_Th_1} 
\[
\sum_{k\in I_{n}}\abs{\hat{B}(k)}^{4}\gtrsim\sum_{k\in I_{n}}\frac{(1-\lambda)}{\left(\lambda(1+\lambda)\right)^{1/3}}\frac{4}{\left(\frac{k}{n}\right)^{2}\left(\frac{k}{n}-\alpha_{0}\right)}\frac{(Ai(-n^{2/3}\gamma^{2}))^{4}}{n^{4/3}}
\]
where $\gamma^{2}=\frac{1-\lambda}{\left(\lambda(1+\lambda)\right)^{1/3}}\left(\alpha_{0}^{-1}-\frac{k}{n}\right).$
Plugging in the oscillatory behavior of $Ai$ 
\[
Ai(-x)=\frac{1}{x^{1/4}\sqrt{\pi}}\cos\left(\frac{2}{3}x^{3/2}-\frac{\pi}{4}\right)+\cO(x^{-7/4}),\qquad x\rightarrow+\infty
\]
we find
\begin{align*}
\sum_{k\in I_{n}}\abs{\hat{B}(k)}^{4} & \gtrsim\frac{1}{n^{4/3}}\sum_{k\in I_{n}}\frac{(1-\lambda)}{\left(\lambda(1+\lambda)\right)^{1/3}}\frac{4}{\left(\frac{k}{n}\right)^{2}\left(\frac{k}{n}-\alpha_{0}\right)}\frac{1}{n^{2/3}\gamma^{2}\pi^{2}}\cos^{4}\left(\frac{2}{3}n\gamma^{3/2}-\frac{\pi}{4}\right)\\
 & \gtrsim\frac{1}{n^{2}}\sum_{k\in I_{n}}\frac{1}{\gamma^{2}}\cos^{4}\left(\frac{2}{3}n\gamma^{3/2}-\frac{\pi}{4}\right).
\end{align*}
because $\lim_{n\rightarrow\infty}\frac{k}{n}=\alpha_{0}^{-1}$ and
$\lambda$ is fixed in (0,1). Plugging in the value of $\gamma$ this
yields
\begin{align*}
\sum_{k\in I_{n}}\abs{\hat{B}(k)}^{4} & \gtrsim\frac{1}{n^{2}}\sum_{k\in I_{n}}\frac{1}{\left(\alpha_{0}^{-1}-\frac{k}{n}\right)}\cos^{4}\left(\frac{2}{3}n\frac{(1-\lambda)^{3/2}}{\left(\lambda(1+\lambda)\right)^{1/2}}\left(\alpha_{0}^{-1}-\frac{k}{n}\right)^{3/2}-\frac{\pi}{4}\right)\\
 & =\frac{1}{n^{2}}\sum_{k\in I_{n}}\frac{1}{\left(\alpha_{0}^{-1}-\frac{k}{n}\right)}\cos^{4}\left(\pi s_{nk}-\frac{\pi}{4}\right)
\end{align*}
where $s_{nk}=\left\langle n\varphi(\frac{k}{n})\right\rangle $,
$\left\langle s\right\rangle $ denoting the fractional part of $s$
and $\varphi(t)=\frac{2}{3\pi}\frac{(1-\lambda)^{3/2}}{\left(\lambda(1+\lambda)\right)^{1/2}}\left(\alpha_{0}^{-1}-t\right)^{3/2}.$ 

$\,$

\textbf{Step 2.} \textit{Equidistribution of }$s_{nk}$. 

$\,$

Given $j\neq0$ in $\mathbb{Z}$ and $k$ in $I_{n}$ we consider
\[
A_{k}:=\sum_{l\in I_{n},\: l\leq k}\exp(2\pi ijs_{nk})=\sum_{l\in I_{n},\: l\leq k}\exp(2\pi ijn\varphi(\frac{k}{n})).
\]
To estimate the above sum we use one of Van der Corput's lemma \cite[Ch. 5, Lemma 4.6]{AZ}:
if $f''(x)\geq\mu>0$ or $f''(x)\leq-\mu<0$ on $[a,b]$ then
\[
\abs{\sum_{a<k\leq b}\exp(2\pi if(k))}\leq\left(\abs{f'(b)-f'(a)}+2\right)\left(\frac{4}{\sqrt{\mu}}+A\right)
\]
where $A$ is an absolute constant. We shall apply this lemma to
the case $f(x)=jn\varphi(\frac{x}{n})$, $b=\alpha_{0}^{-1}n-\sqrt{n}$
and $a=\alpha_{0}^{-1}n-n^{3/4}$. We have $f'(x)=j\varphi'(\frac{x}{n})$
and $f''(x)=\frac{j}{n}\varphi''(\frac{x}{n})$ where $\varphi'(t)=-\frac{1}{\pi}\frac{(1-\lambda)^{3/2}}{\left(\lambda(1+\lambda)\right)^{1/2}}\left(\alpha_{0}^{-1}-t\right)^{1/2}$
and $\varphi''(t)=\frac{1}{2\pi}\frac{(1-\lambda)^{3/2}}{\left(\lambda(1+\lambda)\right)^{1/2}}\left(\alpha_{0}^{-1}-t\right)^{-1/2}$.
In particular for $k\in I_{n}$ we have $\frac{1}{\sqrt{n}}\leq\alpha_{0}^{-1}-\frac{k}{n}\leq\frac{1}{n^{1/4}}$
so that $f'(a)=o(1)$, $f'(b)=o(1)$ as $n$ tends to $\infty$ and
\[
f''(x)\geq\frac{j}{2\pi}n^{-7/8}\frac{(1-\lambda)^{3/2}}{\left(\lambda(1+\lambda)\right)^{1/2}},\qquad x\in[a,b],\: j\geq1,
\]
\[
f''(x)\leq-\frac{\abs{j}}{2\pi}n^{-7/8}\frac{(1-\lambda)^{3/2}}{\left(\lambda(1+\lambda)\right)^{1/2}},\qquad x\in[a,b],\: j\leq-1.
\]
For $n$ large enough we obtain that for any $k\in I_{n}$ 
\[
\abs{A_{k}}\lesssim n^{7/16}.
\]

$\,$

\textbf{Step 3.} \textit{Approximation by trigonometric polynomials
and Abel's transformation}.

$\,$

We reproduce and adapt the proof of \cite[Lemma 3]{DG} 
We define $g(x)=\cos^{4}\left(\pi\left(x-\frac{1}{4}\right)\right)$.
Our aim is to prove that there exists a limit: 
\[
\lim_{n\rightarrow\infty}\frac{1}{n\log n}\sum_{k\in I_{n}}\frac{g(s_{nk})}{\left(\alpha_{0}^{-1}-\frac{k}{n}\right)}=\int_{0}^{1}g(x){\rm d}x.
\]
 We observe that $g$ is continuous and 1-periodic on the real line.
In particular according to Fejér's Theorem, for any $\epsilon>0$
there is a trigonometric polynomial 
\[
p(x)=\sum_{\abs{j}\leq N}c_{j}\exp(2\pi ijx)
\]
such that for $x\in[0,1]$ $-\epsilon\leq g(x)-p(x)\leq\epsilon$. 

We put $M=\sum_{k\in I_{n}}\frac{1}{\left(\alpha_{0}^{-1}-\frac{k}{n}\right)}$.
Using the monotonicity and positivity of $t\mapsto\frac{1}{\alpha_{0}^{-1}-t}$
we compare $M$ with an integral to obtain $M\sim_{n\rightarrow\infty}n\int_{\alpha_{0}^{-1}-n^{-1/4}}^{\alpha_{0}^{-1}-n^{-1/2}}\frac{{\rm d}t}{\left(\alpha_{0}^{-1}-t\right)}$.
After a change of variable we find $M\sim_{n\rightarrow\infty}n\int_{n^{-1/2}}^{n^{-1/4}}\frac{{\rm d}u}{u}$
that is to say
\[
M\sim_{n\rightarrow\infty}\frac{n\log n}{4}.
\]
We now write
\[
M^{-1}\sum_{k\in I_{n}}\frac{p(s_{nk})}{\left(\alpha_{0}^{-1}-\frac{k}{n}\right)}=\sum_{\abs{j}\leq N}c_{j}M^{-1}\sum_{k\in I_{n}}\frac{1}{\left(\alpha_{0}^{-1}-\frac{k}{n}\right)}\exp(2\pi ijs_{nk})
\]
and by applying Abel's summation formula we get for $j\neq0$
\begin{align*}
\sum_{k\in I_{n}}\frac{1}{\left(\alpha_{0}^{-1}-\frac{k}{n}\right)}\exp(2\pi ijs_{nk}) & =\sum_{k\in I_{n}}\frac{1}{\left(\alpha_{0}^{-1}-\frac{k}{n}\right)}(A_{k}-A_{k-1})\\
 & =\left[\frac{A_{[\alpha_{0}^{-1}n-\sqrt{n}]}}{\left(\alpha_{0}^{-1}-\frac{[\alpha_{0}^{-1}n-\sqrt{n}]+1}{n}\right)}-\frac{A_{[\alpha_{0}^{-1}n-n^{3/4}]}}{\left(\alpha_{0}^{-1}-\frac{[\alpha_{0}^{-1}n-n^{3/4}]+1}{n}\right)}\right]\\
 & -\sum_{k\in I_{n}}A_{k}\left(\frac{1}{\left(\alpha_{0}^{-1}-\frac{k+1}{n}\right)}-\frac{1}{\left(\alpha_{0}^{-1}-\frac{k}{n}\right)}\right).
\end{align*}
This yields 
\begin{align*}
M^{-1}\sum_{k\in I_{n}}\frac{1}{\left(\alpha_{0}^{-1}-\frac{k}{n}\right)}\exp(2\pi ijs_{nk}) & =\cO\left(\frac{\sqrt{n}n^{7/16}}{n\log(n)}\right)+\cO\left(\frac{n^{1/4}n^{7/16}}{n\log(n)}\right)\\
 & +\cO\left(\frac{n^{7/16}}{n\log(n)}\right)\sum_{k\in I_{n}}\left(\frac{1}{\left(\alpha_{0}^{-1}-\frac{k+1}{n}\right)}-\frac{1}{\left(\alpha_{0}^{-1}-\frac{k}{n}\right)}\right)\\
 & =\cO\left(\frac{1}{n^{1/16}\log(n)}\right)\\
 & +\cO\left(\frac{n^{7/16}}{n\log(n)}\right)\left[\frac{1}{\left(\alpha_{0}^{-1}-\frac{[\alpha_{0}^{-1}n-\sqrt{n}]+1}{n}\right)}-\frac{1}{\left(\alpha_{0}^{-1}-\frac{[\alpha_{0}^{-1}n-n^{3/4}]+1}{n}\right)}\right]\\
 & =\cO\left(\frac{1}{n^{1/16}\log(n)}\right).
\end{align*}
In particular
\[
\lim_{n\rightarrow\infty}M^{-1}\sum_{k\in I_{n}}\frac{p(s_{nk})}{\left(\alpha_{0}^{-1}-\frac{k}{n}\right)}=c_{0}=\int_{0}^{1}p(x){\rm d}x
\]
but by construction on the interval $[0,1]$ $p-\epsilon\leq g\leq\epsilon+p$
and therefore
\[
M^{-1}\sum_{k\in I_{n}}\frac{p(s_{nk})}{\left(\alpha_{0}^{-1}-\frac{k}{n}\right)}-\epsilon\leq M^{-1}\sum_{k\in I_{n}}\frac{g(s_{nk})}{\left(\alpha_{0}^{-1}-\frac{k}{n}\right)}\leq M^{-1}\sum_{k\in I_{n}}\frac{p(s_{nk})}{\left(\alpha_{0}^{-1}-\frac{k}{n}\right)}+\epsilon.
\]
Passing after to the limit as $n$ tends to $\infty$ we get
\[
\int_{0}^{1}p(x){\rm d}x-\epsilon\leq\limsup_{n\rightarrow\infty}M^{-1}\sum_{k\in I_{n}}\frac{g(s_{nk})}{\left(\alpha_{0}^{-1}-\frac{k}{n}\right)}\leq\int_{0}^{1}p(x){\rm d}x+\epsilon.
\]
Substracting $\int_{0}^{1}g(x){\rm d}x$ this yields 
\[
\limsup_{n\rightarrow\infty}M^{-1}\sum_{k\in I_{n}}\frac{g(s_{nk})}{\left(\alpha_{0}^{-1}-\frac{k}{n}\right)}-\int_{0}^{1}g(x){\rm d}x\leq\int_{0}^{1}\left(p(x)-g(x)\right){\rm d}x+\epsilon\leq2\epsilon
\]
on one hand and 
\[
-2\epsilon\leq\int_{0}^{1}\left(p(x)-g(x)\right){\rm d}x-\epsilon\leq\limsup_{n\rightarrow\infty}M^{-1}\sum_{k\in I_{n}}\frac{g(s_{nk})}{\left(\alpha_{0}^{-1}-\frac{k}{n}\right)}-\int_{0}^{1}g(x){\rm d}x
\]
on the other hand. Since $\epsilon$ is arbitrarily small we obtain
$\limsup_{n\rightarrow\infty}M^{-1}\sum_{k\in I_{n}}\frac{g(s_{nk})}{\left(\alpha_{0}^{-1}-\frac{k}{n}\right)}=\int_{0}^{1}g(x){\rm d}x$
and in the same way $\liminf_{n\rightarrow\infty}M^{-1}\sum_{k\in I_{n}}\frac{g(s_{nk})}{\left(\alpha_{0}^{-1}-\frac{k}{n}\right)}=\int_{0}^{1}g(x){\rm d}x$.
The result follows.

$\,$

\textbf{Step 4.} \textit{Conclusion}

$\,$

With the result from Step 3 and going back to Step 1

\begin{align*}
\sum_{k\in I_{n}}\abs{\hat{B}(k)}^{4} & \gtrsim\frac{1}{n^{2}}\sum_{k\in I_{n}}\frac{1}{\left(\alpha_{0}^{-1}-\frac{k}{n}\right)}\cos^{4}\left(\pi s_{nk}-\frac{\pi}{4}\right)\\
 & =\frac{1}{n^{2}}\sum_{k\in I_{n}}\frac{g(s_{nk})}{\left(\alpha_{0}^{-1}-\frac{k}{n}\right)}\gtrsim\frac{\log n}{n}
\end{align*}
which completes the proof. \end{proof}

\end{document}